\newenvironment{theorem*}[1]
 {\theoremvar}
 {\endtheoremvar}
\newenvironment{corollary*}[1]
 {\corollaryvar}
 {\endtheoremvar}
\newtheorem{theorem}{Theorem}[section]
\newtheorem{lemma}[theorem]{Lemma}
\newtheorem{proposition}[theorem]{Proposition}
\newtheorem{corollary}[theorem]{Corollary}
\newtheorem{conjecture}[theorem]{Conjecture}
\theoremstyle{definition}
\newtheorem{definition}[theorem]{Definition}
\newtheorem{example}[theorem]{Example}
\newtheorem{remark}[theorem]{Remark}
\newcommand{\R}{\mathbb{R}}
\newcommand{\Z}{\mathbb{Z}}
\newcommand{\N}{\mathbb{N}}
\newcommand{\G}{\mathbb{G}}
\theoremstyle{remark}
\newcommand\rsetminus{\mathbin{\mathpalette\rsetminusaux\relax}}
\newcommand\rsetminusaux[2]{\mspace{-4mu}
  \raisebox{\rsmraise{#1}\depth}{\rotatebox[origin=c]{-20}{$#1\smallsetminus$}}
 \mspace{-4mu}
}
\newcommand\rsmraise[1]{%
  \ifx#1\displaystyle .8\else
    \ifx#1\textstyle .8\else
      \ifx#1\scriptstyle .6\else
        .45%
      \fi
    \fi
  \fi}
\begin{document}

\title[Topological Erd\H{o}s similarity conjecture and Smz sets]{Topological Erd\H{o}s similarity conjecture and strong measure zero sets}

\author{Yeonwook Jung}
\address{University of California, Irvine, Department of Mathematics, Rowland Hall, Irvine, CA 92697}
\curraddr{}
\email{yeonwoj1@uci.edu}
\thanks{}

\author{Chun-Kit Lai}
\address{San Francisco State University Department of Mathematics, 1600 Holloway Ave, San Francisco, CA 94132
}
\curraddr{}
\email{cklai@sfsu.edu}
\thanks{}

\subjclass[2010]{Primary }



\begin{abstract}

We solve a topological variant of the Erd\H{o}s similarity conjecture posed by Gallagher, Lai, and Weber. Particularly, we prove that the existence of an uncountable subset of the real line that is universal in the family of all dense $G_\delta$ sets is independent of the usual axioms of set theory. Such sets are shown to correspond to strong measure zero sets, which were first introduced by Borel. Applying the duality principle between measure and category to this result, we derive a conjecture that is strong enough to prove the Erd\H{o}s similarity conjecture for perfect sets.

\end{abstract}

\maketitle


\section{Introduction}



A subset $X$ of $\R$ is said to be \textbf{universal} in a family $\mathcal{F}$ of subsets of $\R$ if every $F\in \mathcal{F}$ contains an affine copy of $X$, i.e., $\lambda X + t\subset F$ for some $\lambda \in \R\setminus \{0\}$ and for some $t\in \R$. A subset $X$ of $\R$ is said to be \textbf{measure universal} if it is universal in the family of all Lebesgue measurable subsets of $\R$ that have positive Lebesgue measure. It is a consequence of the Lebesgue density theorem that finite sets are universal \cite{STE20}. Based on this observation, Erd\H{o}s conjectured that there is no infinite measure universal set, referred to as the Erd\H{o}s similarity conjecture \cite{erdos-1974}.

\begin{conjecture}[Erd\H{o}s]
    \label{conj:original-erdos-similarity-conjecture}
    No infinite subset of the real line is measure universal.
\end{conjecture}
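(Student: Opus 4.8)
The plan is to prove the full statement directly, by producing for every infinite $X \subseteq \R$ a set of positive Lebesgue measure containing no affine copy of $X$. I would first dispose of the easy cases by reduction. Since any affine image $\lambda X + t$ with $\lambda \neq 0$ of an unbounded set is itself unbounded and hence cannot be contained in a bounded positive-measure set such as $[0,1]$, every unbounded infinite set is already non-universal; it therefore suffices to treat bounded $X$. A bounded infinite set has an accumulation point $a$ and so contains a sequence tending to $a$; after translation (which preserves universality), after passing to a strictly monotone subsequence, and using that non-universality is inherited by supersets (a set containing an affine copy of $X$ contains the same affine copy of each subset of $X$), the whole conjecture reduces to the single assertion
\[ \text{every strictly decreasing sequence } x_n \downarrow 0 \text{ is non-universal.} \]

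For this I would employ a random covering construction. Writing $g_n = x_n - x_{n+1} > 0$ for the gaps, I would take $E = [0,1] \setminus \bigcup_{n} U_n$, where the $U_n$ are independently and randomly placed open sets of total expected length strictly below $1$, so that $\mathbb{E}\,|E| > 0$ and hence $|E| > 0$ on a positive-probability event. The object to rule out is the existence of a pair $(\lambda,t)$ with $\lambda \neq 0$ and $\lambda X + t \subseteq E$. I would discretize the parameter space: over a geometric net of scales $\lambda$ and a fine net of translations $t$, I would bound the probability of the event $\{\lambda X + t \subseteq E\}$ and show these probabilities are summable, so that almost surely no affine copy survives. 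The mechanism is that two consecutive points $\lambda x_n + t$ and $\lambda x_{n+1} + t$ sit at distance $\lambda g_n$, and if the random obstacles are placed at the matching scale then each such pair contributes an essentially independent constraint of probability bounded away from $1$; infinitely many such constraints drive the survival probability to $0$.

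The main obstacle, and precisely the reason the conjecture is still open, is the fast-decay regime in which $x_{n+1}/x_n \to 0$ (for example $x_n = 2^{-2^{n}}$). There all but finitely many points of any affine copy are compressed into an arbitrarily small neighbourhood of $t$, so the constraints indexed by different $n$ cease to live on separated scales; a single tiny obstacle may be forced to account for infinitely many indices simultaneously, and the union bound over the $t$-net collapses. To overcome this I would replace the flat covering by a self-similar, multi-scale random Cantor set whose contraction ratios are tuned to the decay of $\{x_n\}$: choosing a sparse subsequence $n_k$, I would arrange that the gaps of the level-$k$ survivor are comparable to $x_{n_k}$, so that the pair $(x_{n_k}, x_{n_k+1})$ encounters a fresh, independent level of randomness. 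The delicate point is to select $n_k$ and the retention ratios simultaneously so that the limit set keeps positive measure and yet every admissible $(\lambda,t)$ is defeated at infinitely many levels; balancing these two requirements uniformly over all possible decay rates is where I expect essentially all of the difficulty to reside.

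Finally, I would use the paper's resolution of the topological variant as a compass. The measure--category duality identifies the extremal objects on the category side as strong measure zero sets, and transporting them to the measure side should pinpoint the exact quantitative threshold that the random construction must beat. In particular this framework already settles the perfect-set instances, and I would treat the perfect-set bound as a sanity check that the multi-scale construction must recover before pushing the same estimates through the fast-decay barrier for general sequences.
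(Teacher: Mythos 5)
There is a fundamental problem here: the statement you were asked about is the Erd\H{o}s similarity conjecture itself, which is a famous \emph{open} problem. The paper does not prove it and does not claim to; it states it as Conjecture \ref{conj:original-erdos-similarity-conjecture}, resolves only a topological variant (universality in dense $G_\delta$ sets), and in Section \ref{section:4} derives further \emph{conjectures} (Conjectures \ref{conj:full-measure-iff-strongly-meager} and \ref{conj:full-meausre-perfect}) whose truth \emph{would} imply the perfect-set case. Your proposal is accordingly a research program, not a proof, and you concede as much: the multi-scale random Cantor construction for the fast-decay regime is described only as a hope, with no choice of the subsequence $n_k$ or retention ratios, no proof that the limit set retains positive measure, and no argument that the constraints at different levels are genuinely independent. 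That last point is exactly where the known obstruction lives: for a rapidly decaying sequence such as $\{2^{-n}\}$ (which the paper notes is precisely the unanswered case), all but finitely many points of $\lambda X + t$ fall in a single cell of \emph{every} level of any self-similar construction beyond a fixed depth, so the ``fresh randomness at each level'' you invoke is not available, and the union bound over the $t$-net at the matching scale costs more than the per-level probability gain. Your preliminary reductions (unbounded sets are trivially non-universal via $[0,1]$; universality passes to subsets, so it suffices to defeat strictly decreasing null sequences) are correct and standard --- the paper itself records this reduction --- and your random covering does recover the sublacunary case, but that is the already-known territory of Eigen and Falconer cited in the introduction.

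A secondary but concrete error: you assert that the measure--category duality framework ``already settles the perfect-set instances.'' It does not, and the paper is explicit about this. The Erd\H{o}s--Kunen--Mauldin theorem gives, for each perfect $P$, a measure-zero $M$ with $P+M=\R$, i.e., perfect sets are not strongly meager; but by Proposition \ref{lem:sumset-full-measure} non-universality in full-measure sets requires a single $M$ defeating \emph{every} scaling $\lambda \neq 0$ simultaneously, and the paper poses exactly this strengthening as the open Conjecture \ref{conj:full-meausre-perfect}. What is actually known unconditionally is much narrower (e.g., Cantor sets of positive Newhouse thickness, or positive Hausdorff dimension, are not full-measure universal), so your proposed ``sanity check'' is itself an open target, not an available benchmark. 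In short: the approach is a reasonable sketch of the landscape, but the step that would constitute the proof --- defeating exponentially and faster decaying sequences --- is missing, and that is not a gap one can patch locally; it is the conjecture.
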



To prove the Erd\H{o}s similarity conjecture, it suffices to show that all sequences that monotonically decreases and converges to zero are not universal. Among such sequences, sublacunary sequences $\{a_n\}$, i.e., those that satisfy $\lim_{n\to\infty} \frac{a_{n+1}}{a_n}=1$, are known to be not measure universal \cite{Eig85,Fal84}, but exponentially decaying sequences, such as $\{2^{-n}\}$, remain unanswered. Also, sumsets of the form $\{a_n\}+\{b_n\}+\{c_n\}$ or $\{2^{-n^\alpha}\}+\{2^{-n^\alpha}\}$ are not measure universal \cite{Bou87,Kol97}. A comprehensive survey on the Erd\H{o}s similarity conjecture containing results published before year 2000 can be found in \cite{svetic}. A review on some of the recent progress on the Erd\H{o}s similarity conjecture can be found in \cite{Jung-Lai-Mooroogen}.

\medskip

Despite these results, arbitrary countable sets still remain elusive in terms of measure universality. In particular, the Erd\H{o}s similarity conjecture remains open for arbitrary uncountable sets, and even for perfect sets.

\medskip

In part of the effort to further understand the Erd\H{o}s similarity conjecture, Gallagher, Lai, and Weber recently considered a topological variant of the Erd\H{o}s similarity conjecture by defining topological universality as follows \cite{gallagpher-lai-weber}. A subset $X$ of $\R$ is said to be \textbf{topologically universal} if it is universal in the family of all dense $G_\delta$ subsets of $\R$. They observed that countable sets are topologically universal by the Baire category theorem and proved that perfect sets are not topologically universal. In particular, they completed the analogy to the Erd\H{o}s similarity conjecture by stating the following conjecture.

\begin{conjecture}[Gallagher-Lai-Weber]\label{conj:gallagher-lai-weber}
    \label{topological-erdos-similarity-conjecture-r}
    No uncountable subset of the real line is topologically universal.
\end{conjecture}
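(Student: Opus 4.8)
The plan is to prove the conjecture by identifying the topologically universal sets exactly with a classical class of small sets, and then forcing every such set to be countable. The first, elementary observation is that topological universality is inherited by subsets: if $\lambda X + t \subseteq F$ and $Y \subseteq X$, then $\lambda Y + t \subseteq F$, so every subset of a topologically universal set is again topologically universal. Combined with the Gallagher--Lai--Weber theorem that perfect sets are not topologically universal, this shows at once that a topologically universal set can contain no perfect subset. That is suggestive of smallness but cannot by itself settle the conjecture, since there exist uncountable sets with no perfect subset, such as Bernstein sets; I therefore need to upgrade ``contains no perfect subset'' to a genuinely metric notion of smallness.

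The core of the argument is the characterization that a set $X$ is topologically universal if and only if $X$ has strong measure zero (is an \smz\ set), which I would establish through the \gms. That theorem asserts that $X$ is an \smz\ set precisely when, for every meager set $M$, some translate $X + t$ is disjoint from $M$. One direction is immediate: if $G$ is a dense $G_\delta$ set then $M = \R \setminus G$ is meager, so a translate $X + t \subseteq G$ exists and is an affine copy of $X$ inside $G$; hence every \smz\ set is topologically universal. The reverse implication is the delicate step. Given a topologically universal $X$ and a meager set $M$, applying universality to the dense $G_\delta$ set $\R \setminus M$ yields an affine copy $\lambda X + t$ disjoint from $M$; because the ideal of \smz\ sets is invariant under affine maps one is tempted to conclude immediately, but the scaling factor $\lambda$ is permitted to depend on $M$, and neutralizing this freedom is the heart of the matter. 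I would handle it by supposing the \smz\ property fails and, using the \gms\ to produce witnessing meager sets, diagonalizing those witnesses against a suitable set of scalings and exploiting the scale invariance of the meager ideal to build a single meager set avoided by no affine copy of $X$; such a set exhibits a dense $G_\delta$ containing no affine copy of $X$, contradicting universality.

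With the characterization in hand, the conjecture reduces to the statement that every \smz\ set is countable, which is precisely \emph{Borel's conjecture}. The concluding step is then to invoke Borel's conjecture, yielding that every topologically universal set is countable; together with the already known fact that countable sets are topologically universal, this gives the clean dichotomy that a set is topologically universal if and only if it is countable, and in particular no uncountable set is topologically universal.

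I expect the difficulties to be of two distinct kinds. The genuinely analytic obstacle is the reverse implication of the characterization, where the scaling freedom in the definition of topological universality must be absorbed into one meager witness; this is the step whose success makes the whole reduction work. The second point is structural rather than computational: the final implication cannot be carried out in $\mathsf{ZFC}$ alone, because the countability of \smz\ sets is not provable there, so the proof must proceed under Borel's conjecture as a hypothesis, which is legitimate since Borel's conjecture is known to be consistent by Laver's forcing construction. The characterization is thus the portion that is provable outright, and it is what transforms the combinatorial content of the conjecture into the classical and well-studied question about \smz\ sets.
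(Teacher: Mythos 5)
Your overall route is the same as the paper's: you characterize the topologically universal subsets of $\R$ as exactly the strong measure zero sets and then reduce the conjecture to Borel's conjecture, so that the statement holds under Borel's conjecture (consistent by Laver) and, as you correctly note, cannot be proved in \textsf{ZFC} outright. The paper records in addition the other half of independence --- by Sierpi\'nski, uncountable \smz{} sets exist under \textsf{CH}, so the conjecture also consistently fails --- whereas your proposal delivers only the consistency half; that is the most a ``proof'' of this statement can mean, so this is a matter of scope rather than error. Your easy direction also matches the paper: strong measure zero plus the Galvin--Mycielski--Solovay theorem gives an affine copy of $X$ (with $\lambda=1$, after the standard reflection bookkeeping) inside any dense $G_\delta$.

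The gap is in the reverse direction, which you rightly single out as the heart of the matter but do not actually prove. Your proposed mechanism --- use Galvin--Mycielski--Solovay to extract, from the failure of \smz{}, witnessing meager sets and then ``diagonalize those witnesses against a suitable set of scalings'' --- hits a concrete obstacle: that theorem hands you, for each $\lambda\neq 0$, a meager $M_\lambda$ with $X+\lambda M_\lambda=\R$, but the scalings range over a set of size continuum while the meager ideal is only countably additive, so no union of witnesses is available unless you first explain why countably many scales suffice. That is the one idea your sketch is missing, and the paper supplies it without invoking Galvin--Mycielski--Solovay at all in this direction: if $X$ is not \smz{}, fix a witnessing sequence $(\epsilon_n)$ and set $G=\bigcap_{k=1}^{\infty}\bigcup_{n=1}^{\infty}\left(q_n-\tfrac{\epsilon_n}{k},\, q_n+\tfrac{\epsilon_n}{k}\right)$, a dense $G_\delta$, where $(q_n)$ enumerates $\Q$. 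If some affine copy $\lambda X+t$ lay inside $G$, then choosing $k_0\geq 2/|\lambda|$ and pulling back the $k=k_0$ layer covers $X$ by intervals of length $2\epsilon_n/(|\lambda| k_0)\leq\epsilon_n$, contradicting the choice of $(\epsilon_n)$. The intersection over $k$ is precisely the countable diagonalization over scales that your plan needs: only $|\lambda|$ matters, and rounding $2/|\lambda|$ up to an integer reduces the continuum of scalings to countably many. With that construction substituted for your sketch, your argument closes and coincides with the paper's proof.
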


In this paper, we give a complete solution to Conjecture \ref{conj:gallagher-lai-weber} on $\R$ and more generally on locally compact Polish groups. We then discuss its implication to the original Erd\H{o}s similarity conjecture. The main results and the outline of this paper are given below.

\medskip

In Section \ref{section:2}, we prove that Conjecture \ref{conj:gallagher-lai-weber} is independent of the standard axioms of set theory, namely, Zermelo-Fraenkel axioms of set theory with axiom of choice (\textsf{ZFC}). We achieve this by proving that {\it the  topologically universal sets are precisely the strong measure zero sets}, which is defined as the following. A subset $X$ of the real line is a {\bf strong measure zero set} if for every sequence $\{\epsilon_n\}_{n=1}^\infty$ of positive real numbers, there exists a sequence of intervals $
\{I_n\}_{n=1}^\infty$ such that $X\subset \cup_{n=1}^\infty I_n$ and $|I_n|< \epsilon_n$. Note that all countable sets are strong measure zero sets. \textbf{Borel conjecture} is the statement that all strong measure zero sets are countable \cite{borel}. It is known that the Borel conjecture is independent of \textsf{ZFC}: Sierpi\'{n}ski \cite{sierpinski} proved the consistency of the negation of the Borel conjecture, and Laver \cite{laver} proved the consistency of the Borel conjecture. This concludes that {\it Conjecture \ref{conj:gallagher-lai-weber} is independent of \textsf{ZFC}}.

\medskip

The fact that Conjecture \ref{conj:gallagher-lai-weber} is independent of \textsf{ZFC} readily generalizes to a more general space of locally compact Polish groups. In particular, this includes $\R^d$ as a special case. Similar to the case of the real line, we prove that the existence of uncountable topologically universal set in any locally compact Polish group is independent of \textsf{ZFC}. This is proved in Section \ref{section:3}. Although the result for the real line can be deduced from the result for locally compact Polish groups, we still choose to present the proof for the real line since the proof is clearer without the additional definitions and the caveats for working in locally compact Polish groups.

\medskip

In Section \ref{section:4}, our resolution of this topological variant of the Erd\H{o}s similarity conjecture is connected to the original Erd\H{o}s similarity conjecture in the following way. Under the principle of duality between measure and category, we obtain the following conjecture as a dual statement of our main result in section 2: a subset of the real line is universal in the family of all sets of full measure if and only if it is a strongly meager set. Since perfect sets are not strongly meager \cite{erdos-kunen-mauldin}, if this conjecture is true, then perfect sets are not universal in sets of full measure, and thus, not measure universal. This would establish a stronger version of the Erd\H{o}s similarity conjecture for perfect sets.

\medskip

\subsection{Notation and basic definitions}
Unless stated otherwise, we work in the Zermelo–Fraenkel set theory with axiom of choice (\textsf{ZFC}). For subsets $A, B$ of a group, $A+B=\{a+b:a\in A, b\in B\}$ denotes the algebraic sum or the Minkowski sum of $A$ and $B$. If $X$ is a subset of a vector space, then $\lambda X = \{ \lambda x : x\in X\}$, where $\lambda$ is a scalar, denotes the scaling of $X$ by $\lambda$. In a topological space, a {\bf meager set} is a countable union of nowhere dense sets. A {\bf nowhere dense set} is a set whose closure has empty interior. A set is {\bf comeager} if its complement is meager. A topological space is called a \textbf{Baire space} if every meager set has no interior, or equivalently, every comeager set is dense. A {\bf $\sigma$-ideal} is a collection of sets that contains the empty set and is closed under taking subsets and countable unions. 


\medskip
\section{Topologically universal sets in $\R$}\label{section:2}
In this section, we classify topologically universal sets in $\R$ and answer the Conjecture \ref{conj:gallagher-lai-weber}.

\begin{lemma} \label{lem:1}
    In a Baire space, every dense $G_\delta$ set contains a comeager set and every comeager set contains a dense $G_\delta$.
\end{lemma}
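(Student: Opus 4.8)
The plan is to treat the two inclusions separately, and in fact to prove the slightly sharper statement that a dense $G_\delta$ set is itself comeager, which immediately yields the first half (the set contains itself). For the converse, given a comeager set, I would extract a dense $G_\delta$ from inside it by passing to closures in the decomposition of the meager complement.

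For the first assertion, let $G=\bigcap_{n} U_n$ be a dense $G_\delta$ set with each $U_n$ open. Since $G\subset U_n$ and $G$ is dense, each $U_n$ is dense, so its complement $X\setminus U_n$ is a closed set with empty interior, i.e., nowhere dense. Then $X\setminus G=\bigcup_n (X\setminus U_n)$ is a countable union of nowhere dense sets, hence meager, so $G$ is comeager. Thus $G$ itself is the required comeager set contained in $G$.

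For the second assertion, let $C$ be comeager, so that $X\setminus C=\bigcup_n N_n$ with each $N_n$ nowhere dense. Replacing each $N_n$ by its closure $F_n=\overline{N_n}$ only enlarges the union while keeping each set nowhere dense, and now closed. Set $G=\bigcap_n (X\setminus F_n)=X\setminus\bigcup_n F_n$. Each $X\setminus F_n$ is open (as $F_n$ is closed) and dense (as $F_n$ is nowhere dense), so $G$ is a $G_\delta$ set whose complement $\bigcup_n F_n$ is meager; hence $G$ is comeager, and by the defining property of a Baire space, $G$ is dense. Finally $G=X\setminus\bigcup_n F_n\subset X\setminus\bigcup_n N_n=X\setminus(X\setminus C)=C$, so $C$ contains the dense $G_\delta$ set $G$.

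The Baire hypothesis enters at exactly one point, namely in concluding that the comeager $G_\delta$ set $G$ constructed in the second part is dense; this is precisely the equivalent reformulation of the Baire property recorded in the notation section, so no circularity arises. I do not anticipate a genuine obstacle here: the content is the set-theoretic bookkeeping of passing to closures to make the relevant sets closed and their complements open and dense, and then correctly invoking the Baire property to upgrade ``comeager'' to ``dense.''
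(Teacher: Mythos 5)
Your proof is correct and follows essentially the same route as the paper's: for the first half you show a dense $G_\delta$ is itself comeager by writing its complement as a countable union of closed nowhere dense sets, and for the second half you pass to closures of the nowhere dense pieces of the meager complement to produce a comeager $G_\delta$ inside the comeager set, invoking the Baire property exactly once to get density. No gaps.
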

\begin{proof}
Let $X$ be a Baire space. Let $G$ be a dense $G_\delta$ subset of $X$. Then, $G=\cap_{n=1}^\infty G_n$ for some open subsets $G_n$ of $X$, where each $G_n$ is dense since $\text{cl}(G_n)\supset \text{cl}(G) =X$. Note that $X\rsetminus G = X\rsetminus (\cap_{n=1}^\infty G_n)=\cup_{n=1}^\infty (X\rsetminus G_n)$, where $X\rsetminus G_n$ is nowhere dense since $\left(\text{cl}(X\rsetminus G_n)\right)^\circ = (X\rsetminus G_n)^\circ = X\rsetminus \text{cl}(G_n)= \varnothing$. Hence, $X\rsetminus G$ is a meager set and $G$ is a comeager set. Thus, $G$ contains a comeager set, namely, $G$ itself.

Next, let $H$ be a comeager subset of $X$. Then, $H=X\rsetminus M$ for some meager subset $M$ of $X$ and $M=\cup_{n=1}^\infty N_n$, where each $N_n$ is a nowhere dense subset of $X$. Since each $\text{cl}(N_n)$ is also a nowhere dense subset of $X$, $\cup_{n=1}^\infty \text{cl}(N_n)$ is a meager $F_\sigma$ set, implying that $X\rsetminus \cup_{n=1}^\infty \text{cl}(N_n)$ is a comeager $G_\delta$ set that is a subset of $H$. Since $X$ is a Baire space and comeager sets are dense in Baire spaces, $H$ contains a dense $G_\delta$ set.
\end{proof}

\begin{lemma}
    A subset $X$ of $\R$ is topologically universal if and only if $X$ is universal in the family of all comeager sets.
    \label{lem:2}
\end{lemma}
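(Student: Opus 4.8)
The plan is to leverage Lemma \ref{lem:1} together with the fact that $\R$, being a complete metric space, is a Baire space by the Baire category theorem. The underlying principle is that the family of dense $G_\delta$ sets and the family of comeager sets are mutually cofinal under inclusion: every dense $G_\delta$ set is itself comeager, and every comeager set contains a dense $G_\delta$ subset. Since universality is a statement about containing an affine copy, and containment is transitive, shrinking or enlarging the target set within these two families cannot change whether an affine copy of $X$ fits inside. I would therefore prove the two implications by chasing an affine copy through the appropriate inclusion provided by Lemma \ref{lem:1}.

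For the forward direction, I would assume $X$ is topologically universal and let $C$ be an arbitrary comeager subset of $\R$. By the second half of Lemma \ref{lem:1}, $C$ contains a dense $G_\delta$ set $G$. Topological universality gives $\lambda, t$ with $\lambda \neq 0$ such that $\lambda X + t \subset G$, and since $G \subset C$ we conclude $\lambda X + t \subset C$. As $C$ was arbitrary, $X$ is universal in the family of all comeager sets.

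For the converse, I would assume $X$ is universal in all comeager sets and let $G$ be an arbitrary dense $G_\delta$ set. By the first half of Lemma \ref{lem:1}, $G$ contains a comeager set, namely $G$ itself, so $G$ is comeager. Applying the hypothesis to $G$ yields an affine copy $\lambda X + t \subset G$, which shows $X$ is universal in dense $G_\delta$ sets, i.e. topologically universal.

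I do not anticipate a genuine obstacle here: the entire content is the two inclusions of Lemma \ref{lem:1}, and the only external input is that $\R$ is a Baire space so that those inclusions apply. The one point worth stating explicitly is the invocation of the Baire category theorem to guarantee the Baire space hypothesis of Lemma \ref{lem:1}; after that the argument is a direct transitivity-of-inclusion chase in both directions.
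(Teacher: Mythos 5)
Your proof is correct and follows essentially the same route as the paper: both directions are obtained by passing through the inclusions of Lemma \ref{lem:1} (a comeager set contains a dense $G_\delta$, and a dense $G_\delta$ is itself comeager) and then using transitivity of containment for the affine copy. Your explicit remark that the Baire category theorem supplies the Baire space hypothesis of Lemma \ref{lem:1} for $\R$ is a small but welcome clarification that the paper leaves implicit.
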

\begin{proof}
Let $X$ be a topologically universal subset of $\R$ and $H$ be a comeager subset of $\R$. By Lemma \ref{lem:1}, there exists a dense $G_\delta$ set $G$ such that $G\subset H$ and since $X$ is topologically universal, there is an affine copy $\lambda X +t$, for some $\lambda \neq 0$ and $t\in \R$, of $X$, such that $\lambda X + t \subset G$. Thus, $\lambda X +t \subset H$ and $X$ is universal in comeager sets. The other direction is obtained by the same proof upon exchanging comeager sets and $G_\delta$ sets.
\end{proof}

In order to make connection between topologically universal sets and strong measure zero sets in $\R$, we need the following key theorem by Galvin, Mycielski, and Solovay \cite{Galvin-Mycielski-Solovay-1973} (announced without proof). For a proof, see \cite{BJ95} and \cite{Galvin-Mycielski-Solovay-2017}.

\begin{theorem*}{A}[Galvin-Mycielski-Solovay]\label{thm:GMS}
    A subset $X$ of $\R$ is a strong measure zero set if and only if for every meager set $M$ in $\R$, $X+M\neq \R$.
\end{theorem*}

We notice that topological universality can also be phrased as a sum set problem similar to the Galvin-Mycielski-Solovay theorem (Theorem \ref{thm:GMS}).

\begin{lemma}
\label{lem:3}
 A subset $X$ of $\R$ is topologically universal if and only if for every meager set $M$ in $\R$, there exists $\lambda \in \R\rsetminus \{0\}$ such that $X+\lambda M \neq \R$.
\begin{proof}
We first prove the only if. Suppose $X$ is topologically universal subset of $\R$. Then, by Lemma \ref{lem:2}, for every meager set $M$ in $\R$, there exists $\lambda' \in \R\setminus \{0\}$ and $t\in \R$ such that $\lambda' X +t \subset M^c$, i.e., $(\lambda' X + t)\cap M =\varnothing$. This implies that for all $x\in X$ and $y\in M$, $\lambda' x+t \neq y$, i.e., $x-\frac{1}{\lambda '} y \neq -\frac{t}{\lambda '}$. Hence, $-\frac{t}{\lambda'}\not\in X-\frac{1}{\lambda '}M$, i.e., $X-\frac{1}{\lambda}M \neq \R$. Taking $\lambda = -\frac{1}{\lambda '}$, we have $X+\lambda M \neq \R$.

To see the if part, let $X$ be a subset of $\R$ and suppose that for each meager set $N$ in $\R$, there exists $\lambda' \in \R\setminus \{0\}$ such that $X+\lambda' N\neq \R$. Let $M$ be a meager set in $\R$. Then, there exists $a\in \R$ such that for all $x\in X$ and $y \in M$, $x+\lambda' y \neq a$, i.e., $\frac{1}{\lambda '}x-\frac{a}{\lambda'} \neq - y$. Thus, by letting $\lambda = \frac{1}{\lambda '}$ and $t=-\frac{a}{\lambda'}$, we have $(\lambda X+t) \cap M = \varnothing$, i.e., $\lambda X + t \subset M^c$. Therefore, $X$ is universal in comeager subsets of $\R$, and by Lemma \ref{lem:2}, $X$ is topologically universal.
\end{proof}
\end{lemma}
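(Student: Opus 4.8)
The plan is to route everything through Lemma~\ref{lem:2}, which already reduces topological universality to universality in the family of comeager sets, and then to trade the geometric containment for an arithmetic nonsurjectivity of a sumset. The only genuine input is the trivial duality between meager and comeager sets: a set $H$ is comeager exactly when its complement $M=\R\setminus H$ is meager, and an affine copy $\lambda X+t$ lies inside $H$ precisely when it avoids $M$, i.e. $(\lambda X+t)\cap M=\varnothing$.

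The core of the argument is a single affine change of variables, which I would isolate as the following equivalence holding for \emph{any} fixed set $M$: there exist $\lambda\in\R\setminus\{0\}$ and $t\in\R$ with $(\lambda X+t)\cap M=\varnothing$ if and only if there exists $\mu\in\R\setminus\{0\}$ with $X+\mu M\neq\R$. To see the forward implication, I would unfold the disjointness pointwise as $\lambda x+t\neq y$ for all $x\in X$ and $y\in M$, divide by $\lambda$, and rearrange to $x-\tfrac{1}{\lambda}y\neq -\tfrac{t}{\lambda}$; this exhibits $-\tfrac{t}{\lambda}$ as a point missing from $X-\tfrac{1}{\lambda}M$, so $X+\mu M\neq\R$ with $\mu=-\tfrac{1}{\lambda}$. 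The reverse implication runs the same computation backwards: a witness $a\notin X+\mu M$ gives $x+\mu y\neq a$ for all $x,y$, and solving the inequality for $y$ produces $\lambda x+t\neq y$ with $\lambda=-\tfrac{1}{\mu}$ and $t=\tfrac{a}{\mu}$, i.e. $(\lambda X+t)\cap M=\varnothing$. The map $\lambda\mapsto-\tfrac{1}{\lambda}$ is an involution on $\R\setminus\{0\}$, so the nonzero constraints match up automatically.

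With this equivalence in hand, the lemma is immediate. Topological universality is, by Lemma~\ref{lem:2}, the statement that for every comeager $H$ there exist $\lambda\neq0$ and $t$ with $\lambda X+t\subset H$; writing $M=\R\setminus H$ and using the remark above, this is the statement that for every meager $M$ there exist $\lambda\neq0$ and $t$ with $(\lambda X+t)\cap M=\varnothing$; and by the fixed-$M$ equivalence this is exactly the statement that for every meager $M$ there exists $\mu\neq0$ with $X+\mu M\neq\R$. Renaming $\mu$ as $\lambda$ gives the desired formulation.

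I do not expect a real obstacle: the entire content is one affine substitution together with the complementation $H\leftrightarrow\R\setminus H$. The only points requiring care are purely clerical — keeping the signs straight in the rearrangement, checking that the quantifier structure (one scaling per meager set) is preserved in both directions, and noting that $\mu=-1/\lambda$ is nonzero precisely when $\lambda$ is.
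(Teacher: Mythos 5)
Your proposal is correct and follows essentially the same route as the paper: reduce to universality in comeager sets via Lemma~\ref{lem:2}, then convert the disjointness $(\lambda X+t)\cap M=\varnothing$ into nonsurjectivity of a sumset by a single affine change of variables. In fact your converse direction, which solves the inequality for $y$ and takes $\lambda=-\tfrac{1}{\mu}$, $t=\tfrac{a}{\mu}$, handles the sign more carefully than the paper's own write-up, whose choice $\lambda=\tfrac{1}{\lambda'}$, $t=-\tfrac{a}{\lambda'}$ literally yields disjointness of $\lambda X+t$ from $-M$ rather than $M$ (a harmless slip, fixable exactly as you did or by applying the hypothesis to the meager set $-M$).
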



\begin{theorem}
    A subset of the real line is topologically universal if and only if it is a strong measure zero set.
    \label{thm:R-top-univ}
\end{theorem}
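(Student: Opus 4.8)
The plan is to prove both implications by matching the sum-set reformulations now available: topological universality is captured by Lemma \ref{lem:3} as the statement that for every meager $M\subseteq\R$ there exists $\lambda\neq 0$ with $X+\lambda M\neq\R$, while being a strong measure zero set is captured by the Galvin--Mycielski--Solovay theorem (Theorem \ref{thm:GMS}) as the statement that $X+M\neq\R$ for every meager $M$. The entire content of the theorem is therefore that the per-$M$ choice of scaling $\lambda$ in Lemma \ref{lem:3} can be dispensed with.

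The easy direction is strong measure zero $\Rightarrow$ topologically universal. If $X$ is strong measure zero, then by Theorem \ref{thm:GMS} we have $X+M\neq\R$ for every meager $M$; taking $\lambda=1$ in the reformulation of Lemma \ref{lem:3} shows immediately that $X$ is topologically universal. No scaling is needed, and this direction uses nothing beyond the two reformulations.

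The substance is the converse, which I would argue by contraposition. Assume $X$ is not strong measure zero. I want to produce a single meager set witnessing the failure of topological universality: by Lemma \ref{lem:2}, a comeager set containing no affine copy of $X$, equivalently a meager set $N$ meeting every affine copy $\lambda X+t$ of $X$, equivalently $X+\lambda N=\R$ for \emph{all} $\lambda\neq 0$ simultaneously. By Theorem \ref{thm:GMS} the hypothesis supplies only one meager set $M_0$ with $X+M_0=\R$, i.e. control at the single scale $\lambda=1$; the difficulty is that $X+M_0=\R$ gives no information about $X+cM_0$ for other scalings $c$, and a meager set literally containing a rescaled copy $\tfrac1\lambda M_0$ for every $\lambda$ would fail to be meager. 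To get a first handle I would apply Lemma \ref{lem:3} to the rationally scale-saturated meager set $\widehat M=\bigcup_{q\in\Q\setminus\{0\}}qM$, which produces, for each fixed meager $M$, a \emph{dense} set of scalings $c\in\lambda\Q$ for which $X+cM\neq\R$.

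The main obstacle is upgrading this dense set of good scalings to all scalings (in particular $\lambda=1$), equivalently assembling the single all-scales meager witness $N$ above. This cannot be done by merely combining countably many single-scale witnesses, since the good-scale relation does not transfer between distinct scales while the scalings are uncountable. My plan is instead to cover the multiplicative group $\R\setminus\{0\}$ by the countably many compact blocks $\pm[2^k,2^{k+1}]$, $k\in\Z$, and to construct for each such block $K$ a single closed nowhere dense set $N_K$ meeting $\lambda X+t$ for every $\lambda\in K$ and every $t\in\R$; then $N=\bigcup_K N_K$ is meager and meets every affine copy of $X$. The uniformity over the compact range $K$ should be attainable precisely because the distortion ratio of the copies $\{\lambda X:\lambda\in K\}$ is bounded, so that a single gap structure, built at a sufficiently fast sequence of resolutions coming from a sequence $\{\epsilon_n\}$ witnessing the failure of strong measure zero, catches all of them at once. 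I expect this uniform, parametrized strengthening of the hard direction of Galvin--Mycielski--Solovay to be the crux of the argument.
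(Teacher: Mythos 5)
Your first direction (strong measure zero $\Rightarrow$ topologically universal) is correct and coincides with the paper's argument: Theorem \ref{thm:GMS} gives $X+M\neq\R$ for every meager $M$, and taking $\lambda=1$ in Lemma \ref{lem:3} finishes. The genuine gap is in the converse, exactly at the place you flag yourself: you never construct the per-block closed nowhere dense sets $N_K$, you only assert that a ``single gap structure\dots catches all of them at once'' and defer it as the crux. As written, that direction is a plan, not a proof. Compounding this, you misidentify what the crux requires: the hard direction of Galvin--Mycielski--Solovay is ``strong measure zero $\Rightarrow X+M\neq\R$ for all meager $M$,'' and that is already consumed as a black box in your easy direction. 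What your contraposition needs is a \emph{uniform version of the easy direction} of GMS (not strong measure zero $\Rightarrow$ some meager $M$ has $X+M=\R$), and that is elementary --- so the missing step is far cheaper than your closing sentence suggests, but it does have to be written down.

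Here is the fill, which also shows that your block decomposition has the right shape but the wrong emphasis: what matters is a \emph{lower} bound on $|\lambda|$, not compactness of the block (contracted copies of $X$ are the enemy; expanded ones come for free). Let $(\epsilon_n)$ witness that $X$ is not strong measure zero and let $(q_n)$ enumerate $\Q$. For $k\in\Z$ set
\[
N_k=\left[\bigcup_{n=1}^{\infty}\left(q_n-2^{k-2}\epsilon_n,\; q_n+2^{k-2}\epsilon_n\right)\right]^{c},
\]
a closed nowhere dense set. If a copy $\lambda X+t$ with $|\lambda|\geq 2^k$ missed $N_k$, then $X\subset\bigcup_{n}\frac{1}{\lambda}\left(q_n-2^{k-2}\epsilon_n-t,\;q_n+2^{k-2}\epsilon_n-t\right)$, a cover of $X$ by intervals of length $2^{k-1}\epsilon_n/|\lambda|\leq\epsilon_n/2<\epsilon_n$, contradicting the choice of $(\epsilon_n)$. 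Hence $N=\bigcup_{k\in\Z}N_k$ is meager and meets every affine copy of $X$ (given $\lambda\neq 0$, pick $k$ with $2^k\leq|\lambda|$), so by Lemmas \ref{lem:1} and \ref{lem:2} (or your sumset reformulation) $X$ is not topologically universal. Your preliminary detour through $\widehat M=\bigcup_{q}qM$ can be deleted; it plays no role. Once this is in place, your argument is the paper's proof read contrapositively: the paper feeds the dense $G_\delta$ set $G=\bigcap_{k=1}^\infty\bigcup_{n=1}^\infty(q_n-\epsilon_n/k,\,q_n+\epsilon_n/k)$ --- whose complement is precisely such a scale-indexed countable union of closed nowhere dense sets --- directly into the definition of topological universality, noting that a copy $\lambda X+t\subset G$ lies in the single layer $k_0=\lceil 2/|\lambda|\rceil$, which pulls back to an $\epsilon_n$-cover of $X$. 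The direct form is shorter because it needs neither the contraposition nor any meager-set bookkeeping.
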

\begin{proof}
    We first show the if part. Suppose that $X$ is a strong measure zero set. By the Galvin-Mycielski-Solovay theorem (Theorem \ref{thm:GMS}), $X+M\ne\R$ for all meager sets $M$ in $\R$. We now take $\lambda = 1$ and apply Lemma \ref{lem:3} to conclude that $X$ is topologically universal.

Next, we show the only if part. Let $X$ in $\R$ be a topologically universal. Let $(\epsilon_n)_{n\in \N}$ be a sequence of positive reals. Define 
\begin{equation*}
G=\bigcap_{k=1}^{\infty}\bigcup_{n=1}^{\infty}\left(q_n-\frac{\epsilon_n}{k}, q_n+\frac{\epsilon_n}{k}\right),
\end{equation*}
where $(q_n)_{n\in\N}$ is an enumeration of the rational numbers. Since $G$ is a dense $G_\delta$ set, there exists $\lambda \neq 0$ and $t\in\mathbb{R}$ such that $\lambda X + t \subset G$. Let $k_0 = \lceil \frac{2}{|\lambda|} \rceil$. Then,
\begin{equation*}
\lambda X + t \subset \bigcup_{n=1}^{\infty}\left(q_n-\frac{\epsilon_n}{k_0}, q_n+\frac{\epsilon_n}{k_0}\right),
\end{equation*}
Denoting $I_n=\frac{1}{\lambda}\left(q_n-\frac{\epsilon_n}{k_0}-t, q_n+\frac{\epsilon_n}{k_0}-t\right)$, we have $X\subset\bigcup\limits_{n\in\N}I_n$ with $|I_n|=\frac{2\epsilon_n}{|\lambda| k_0}\leq \epsilon_n$. This proves that $X$ is a strong measure zero set.
\end{proof}

\begin{corollary}\label{cor:1.indep}
    Conjecture \ref{conj:gallagher-lai-weber} is independent of \textsf{ZFC}.
\begin{proof}
    This follows from the fact that the Borel conjecture is independent of \textsf{ZFC}.
\end{proof}
\end{corollary}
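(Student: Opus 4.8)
The plan is to reduce the statement entirely to the independence of the Borel conjecture, using the characterization obtained in Theorem \ref{thm:R-top-univ} as the bridge. First I would emphasize that Theorem \ref{thm:R-top-univ} is a theorem of \textsf{ZFC}: it asserts, provably from the usual axioms (via the Galvin–Mycielski–Solovay theorem, Theorem \ref{thm:GMS}), that the topologically universal subsets of $\R$ are exactly the strong measure zero sets. Consequently, in every model of \textsf{ZFC} these two classes of sets coincide.

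With this in hand, I would translate Conjecture \ref{conj:gallagher-lai-weber} along the equivalence. The conjecture asserts that no uncountable subset of $\R$ is topologically universal; by Theorem \ref{thm:R-top-univ} this is the same as asserting that no uncountable subset of $\R$ is a strong measure zero set, i.e., that every strong measure zero set is countable. But this is precisely the statement of the Borel conjecture. Since the biconditional in Theorem \ref{thm:R-top-univ} holds in \textsf{ZFC}, Conjecture \ref{conj:gallagher-lai-weber} and the Borel conjecture are provably equivalent over \textsf{ZFC}, so a model of \textsf{ZFC} satisfies one if and only if it satisfies the other.

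It then remains to invoke the two classical independence results. Sierpi\'{n}ski's construction \cite{sierpinski} produces, under the continuum hypothesis, an uncountable strong measure zero set, hence a model of \textsf{ZFC} in which the Borel conjecture — and therefore Conjecture \ref{conj:gallagher-lai-weber} — fails. Conversely, Laver's forcing model \cite{laver} is a model of \textsf{ZFC} in which the Borel conjecture, and therefore Conjecture \ref{conj:gallagher-lai-weber}, holds. Together these establish that Conjecture \ref{conj:gallagher-lai-weber} can be neither proved nor refuted from \textsf{ZFC}.

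I do not anticipate a genuine obstacle here: all of the substantive work has been front-loaded into Theorem \ref{thm:R-top-univ} and into the cited set-theoretic results, so the argument is essentially a one-line translation followed by an appeal to known consistency and independence statements. The only point that deserves care is to record explicitly that the equivalence in Theorem \ref{thm:R-top-univ} is a \textsf{ZFC}-theorem, so that the independence transfers cleanly from the Borel conjecture to Conjecture \ref{conj:gallagher-lai-weber}; once that observation is made, the conclusion is immediate.
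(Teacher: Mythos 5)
Your proposal is correct and follows exactly the paper's intended argument: the paper's one-line proof implicitly uses Theorem \ref{thm:R-top-univ} to identify Conjecture \ref{conj:gallagher-lai-weber} with the Borel conjecture, then cites the Sierpi\'{n}ski and Laver results (recalled in the introduction) for the independence. Your write-up simply makes explicit the translation and the appeal to the two consistency results that the paper leaves implicit.
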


\begin{remark} \label{rmk:1} In addition to answering Conjecture \ref{conj:gallagher-lai-weber}, Theorem \ref{thm:R-top-univ} gives a simple and direct proof of known results about topologically universal sets.
\begin{enumerate}
\item It is now easy to see that countable sets are topologically universal. One way to see this is to recall that countable sets are strong measure zero sets (almost by definition). Another way is to directly use Lemma \ref{lem:3}, and note that if $X\subset \R$ is a countable set and $M\subset \R$ is a meager set, then $X+M$ is a countable union of meager sets, which is meager. Since $\R$ is a Baire space by the Baire category theorem, $X+M$ cannot be $\R$, and Lemma \ref{lem:3} finishes the proof. These offer simple alternative ways to deduce the topological universality of countable sets, which was first noted in \cite{gallagpher-lai-weber}.
\item Perfect sets are not topologically universal since perfect sets are not strong measure zero sets \cite[Corollary 8.1.5]{bartoszynski-Judah}. This recovers the previous result \cite[Theorem 1.4]{gallagpher-lai-weber} (for $\R^d$ with $d>1$, we prove it in the next section).
\item Topologically universal sets have zero Hausdorff dimension, since strong measure zero sets have zero Hausdorff dimension. This recovers a result in \cite[page 3]{gallagpher-lai-weber}.
\item The set of all topologically universal sets form a $\sigma$-ideal. This follows from the fact that strong measure zero sets form a $\sigma$-ideal \cite[Lemma 8.1.2]{bartoszynski-Judah}. The fact that topologically universal sets form a $\sigma$-ideal is a nontrivial statement. Indeed, it is not hard to see that measure universal sets cannot form a $\sigma$-ideal because finite sets are measure universal while we know some countable sets are not measure universal. 
\end{enumerate}
\end{remark}

We elaborate on Remark \ref{rmk:1} (2). Recall that a set $X$ has the {\bf perfect set property} if $X$ contains a perfect subset. It is well known that every Borel set (more generally, every analytic set) contains a perfect subset \cite[Exercise 14.13]{Kec95} (for original references, see \cite{Luzin-note-on-suslin} as explained in \cite[page 3]{Foreman-Kanamori-handbook-of-set-theory}.) Thus, Borel sets are not topologically universal since they contain perfect sets, which are themselves not topologically universal. Bernstein sets are examples of uncountable subsets of the real line that do not have the perfect set property:
\begin{definition}
    A subset $\mathcal{B}$ of $\R$ is a {\bf Bernstein set} if for every perfect subset $P$ in $\R$, $\mathcal{B}\cap P \neq \varnothing$ and $\mathcal{B}^c \cap P \neq \varnothing$.
\end{definition}
Uncountable Bernstein sets can be constructed in \textsf{ZFC}, using the axiom of choice (see \cite[Example 8.24]{Kec95}). These render the following question in \cite[page 5]{gallagpher-lai-weber} by Gallagher, Lai, and Weber interesting : Are Bernstein sets topologically universal? We give a negative answer to this question in the next example.

\begin{example}[Bernstein sets] Clearly, Bernstein sets are not strong measure zero sets. This is because strong measure zero sets are Lebesgue measurable (since strong measure zero sets have zero Lebesgue measure), but Bernstein sets are not Lebesgue measurable \cite[Theorem 5.4]{Oxt80}. By Theorem \ref{thm:R-top-univ}, Bernstein sets are not topologically universal. This can alternatively be seen from the fact that topologically universal sets have zero Hausdorff dimension, and thus, it has zero Lebesgue measure implying that it must be Lebesgue measurable. 
\end{example}

\section{Generalization to Locally Compact Polish Groups}\label{section:3}
In this section, we generalize Theorem \ref{thm:R-top-univ} and Corollary \ref{cor:1.indep} from $\R$ to locally compact Polish groups. As a special case, we resolve this topological variant of the Erd\H{o}s similarity conjecture for $\R^d$, giving a full answer to \cite[Conjecture 1.9]{gallagpher-lai-weber}.

Recall that a Polish space is a separable completely metrizable topological space. A Polish group $(\G,\cdot)$ is a topological group that is a Polish space. We denote the identity element on $\G$ by ${\mathbf 1}_\G$. By Birkhoff–Kakutani theorem, every Polish group admits a compatible metric $d$ that is left-invariant, but not every Polish group admits a compatible metric that is simultaneously left-invariant and complete. Every locally compact Polish group admits a compatible metric that is left-invariant and complete.

We now introduce the affine transformation in locally compact Polish groups by generalizing the definition of affine transformations in Euclidean space. By Haar's theorem, for every locally compact Polish group, there is a unique countably additive nontrivial Haar measure upto positive multiplicative constant.

\begin{definition}
Let $\G$ be a locally compact Polish group.
\begin{enumerate}
\item The \textbf{automorphism group} of $\G$, denoted by $\text{Aut}(\G)$, is the collection of all bijective Haar measurable functions from $\G$ to $\G$  that preserve the group operation. 
\item An \textbf{affine transformation} $f$ is a bijection from $\G$ to $\G$ such that there exists $\varphi \in \text{Aut}(\G)$ and $h\in\G$ such that $f(g)=h\cdot \varphi(g)$ for all $g\in\G$.
    \label{affine-topological-group}
\item A subset $X$ of $\G$ is \textbf{topologically universal} in $\G$ if for every dense $G_\delta$ subset $G$ of $\G$ (or equivalently, for every comeager subset of $\G$, by Lemma \ref{lem:3.4} below), there exists an affine map $f:\G\to\G$ such that $f(A)\subset G$.
\end{enumerate}
\end{definition}

\begin{example}
    For $\G=\R^n$, $\text{Aut}(\R^n)=\mathsf{GL}(n,\R)$ and the locally compact Polish group affine transformation coincides with the usual affine transformation on $\R^n$.
\end{example}


\begin{remark} In this section, we will use the following facts.\label{rmk:2}
    \begin{enumerate}
\item If $\varphi$ is an affine transformation in a locally Polish group, then $\varphi^{-1}$ is also an affine transformation, since an injective Borel measurable function maps Borel sets to Borel sets \cite[Corollary 15.2]{Kec95}
\item An automorphism on a locally compact Polish group is a homeomorphism. This is because every Haar measurable homomorphism between locally compact Polish groups is continuous, which is a result of Weil \cite[page 50]{weil-1951}. For a recent generalization of this result to Polish groups that are not necessarily locally compact, see \cite{rosendal}.
\end{enumerate}
\end{remark}

Now, Conjecture \ref{topological-erdos-similarity-conjecture-r} can be naturally generalized to locally compact Polish groups.
\begin{conjecture}[]\label{conj:3}
    There is no uncountable topologically universal set in any locally compact Polish group.
\end{conjecture}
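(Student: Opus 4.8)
The plan is to reduce the statement to the Borel conjecture by means of a group-theoretic analogue of Theorem~\ref{thm:R-top-univ}, and then to establish it under that hypothesis, which is consistent with \textsf{ZFC} by Laver's theorem. I fix once and for all a left-invariant, complete, compatible metric $d$ on $\G$, which exists because $\G$ is a locally compact Polish group; strong measure zero in $\G$ is taken with respect to $d$, using metric balls in place of intervals.

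First I would transport the lemmas of Section~\ref{section:2} to $\G$. Since $\G$ is completely metrizable it is a Baire space, so Lemma~\ref{lem:1} transfers verbatim. Every $\varphi\in\text{Aut}(\G)$ is a homeomorphism (Remark~\ref{rmk:2}(2)), hence preserves meagerness, and is uniformly continuous for $d$ together with its inverse, hence preserves strong measure zero; consequently the comeager reformulation (Lemma~\ref{lem:2}) and a product-set reformulation analogous to Lemma~\ref{lem:3} go through. The latter takes the form: $X$ is topologically universal in $\G$ if and only if for every meager set $M\subset\G$ there exists $\varphi\in\text{Aut}(\G)$ with $X\cdot\varphi(M)\neq\G$, where care must be taken with left versus right translates since $d$ is only left-invariant. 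The crux is then the group analogue of the Galvin--Mycielski--Solovay theorem (Theorem~\ref{thm:GMS}): a subset $X\subset(\G,d)$ has strong measure zero if and only if $X\cdot M\neq\G$ for every meager $M\subset\G$. Combining this equivalence with the product-set reformulation yields the desired characterization: $X$ is topologically universal in $\G$ if and only if $X$ is a strong measure zero subset of $(\G,d)$.

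With the characterization in hand, I would invoke the Borel conjecture to finish. Under the Borel conjecture every strong measure zero subset of $\R$ is countable; using the transfer principle that this forces every strong measure zero subset of an arbitrary separable metric space to be countable (which holds in Laver's model since strong measure zero is a uniform invariant and $(\G,d)$ is separable), every strong measure zero subset of $(\G,d)$ is countable. The characterization then shows that $\G$ admits no uncountable topologically universal set, establishing the conjecture. I expect the main obstacle to be twofold: proving the Galvin--Mycielski--Solovay equivalence in $\G$, where the interval geometry of $\R$ is replaced by the left-invariant uniform structure of $(\G,d)$ and only left-invariance (not two-sided invariance) is available; and verifying the transfer of the Borel conjecture from $\R$ to $(\G,d)$, ensuring that the relevant covering condition is unaffected by passing from the line to a general locally compact Polish group.
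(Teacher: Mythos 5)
Your overall architecture is the paper's: characterize topologically universal subsets of a locally compact Polish group $\G$ as strong measure zero sets, then transfer the Borel conjecture from $\R$ to $\G$ and conclude consistency via Laver (the paper's full resolution is independence of \textsf{ZFC}; your attempt supplies only the consistency half, which is indeed the only half a ``proof'' of the conjecture can give). Two of your anticipated obstacles are non-issues: the group analogue of Galvin--Mycielski--Solovay is the Kysiak--Fremlin theorem (Theorem~\ref{thm:gms-locally-compact-polish-group}), which the paper cites rather than proves, and the transfer of the Borel conjecture is Carlson's theorem (used in Theorem~\ref{thm:D}), a \textsf{ZFC} implication, so no appeal to special features of Laver's model is needed; metric-independence of strong measure zero is Fremlin's theorem on Rothberger boundedness (Theorem~\ref{thm:C}). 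The genuine gap is the step you dispatch in one sentence: ``combining this equivalence with the product-set reformulation yields the desired characterization.'' It does not. The product-set reformulation (Lemma~\ref{sumset-top-group}) says that for every meager $M$ there \emph{exists} $\varphi\in\mathrm{Aut}(\G)$, depending on $M$, with $X\cdot\varphi(M)\neq\G$, while Kysiak--Fremlin requires $X\cdot M\neq \G$ for \emph{every} meager $M$ with no automorphism available. Taking $\varphi=\mathrm{id}_\G$ gives strong measure zero $\Rightarrow$ universal, but the converse --- the direction your argument actually needs --- does not follow: from $X\cdot\varphi(M)\neq\G$ one only gets $\varphi^{-1}(X)\cdot M\neq\G$, a statement about $\varphi^{-1}(X)$ rather than $X$, and one cannot absorb all automorphisms into a single meager test set since $\mathrm{Aut}(\G)$ is in general uncountable. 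The same quantifier mismatch already occurs on the line: universality yields, for each meager $M$, some $\lambda$ with $(1/\lambda)X+M\neq\R$, with $\lambda$ depending on $M$, which is why the paper's proof of Theorem~\ref{thm:R-top-univ} does not derive this direction from GMS either.

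The paper closes this gap by proving ``universal $\Rightarrow$ Rothberger bounded'' directly, bypassing GMS in that direction. Given a sequence $(U_n)$ of neighborhoods of ${\mathbf 1}_\G$, it builds a single dense $G_\delta$ set $G=\bigcap_{k=1}^\infty\bigcup_{n=1}^\infty g_n\cdot(B_k\cap U_n)$ over an entire countable neighborhood basis $\{B_k\}$: the intersection over $k$ pre-compensates for the automorphism, which is revealed only after universality produces $f=h\cdot\varphi$ with $f(X)\subset G$. Since $\varphi$ is a homeomorphism (Remark~\ref{rmk:2}), $\{\varphi^{-1}(B_k)\}$ is again a neighborhood basis at ${\mathbf 1}_\G$, so a suitable $k$ makes $\varphi^{-1}(B_k)$ small enough to conclude $X\subset\bigcup_{n=1}^\infty\left(\varphi^{-1}(h^{-1}\cdot g_n)\right)\cdot U_n$. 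This is the exact analogue of the device $k_0=\lceil 2/|\lambda|\rceil$ in the real-line proof, where the intersection over $k$ absorbs the unknown scaling. Your uniform-continuity observation (automorphisms are uniformly continuous with respect to a left-invariant metric, as are left translations) is correct and could be used to run a metric version of this covering argument, but some such direct construction is indispensable; the two cited theorems alone do not yield the characterization you need.
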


The main goal of this section is to show that the above conjecture is independent of \textsf{ZFC}. We start with the following analogue of Lemma \ref{lem:2}.

\begin{lemma}\label{lem:3.4}
A subset $X$ of a locally compact Polish group $\G$ is topologically universal if and only if $X$ is universal in the family of all comeager sets in $\G$.
\begin{proof}
    Since Polish spaces are Baire spaces by Baire Category Theorem, Lemma \ref{lem:1} states that every dense $G_\delta$ subset of $\G$ contains a comeager subset of $\G$ and every comeager subset of $\G$ contains a dense $G_\delta$ subset of $\G$. This is sufficient to prove the statement by the same logic as in Lemma \ref{lem:2}.
\end{proof}
\end{lemma}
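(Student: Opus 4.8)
The plan is to mirror the proof of Lemma \ref{lem:2}, replacing the affine maps $x \mapsto \lambda x + t$ on $\R$ by the general affine transformations $f(g) = h \cdot \varphi(g)$ available on $\G$, and to supply the one structural fact that makes the real-line argument go through in the present generality: that $\G$ is a Baire space. Indeed, a locally compact Polish group is in particular a Polish space, hence completely metrizable, so the Baire category theorem applies and $\G$ is a Baire space. This is the only place where the topological hypotheses on $\G$ enter; once it is in hand, Lemma \ref{lem:1} applies verbatim to $\G$, giving that every dense $G_\delta$ subset of $\G$ contains (in fact is) a comeager set, and every comeager subset of $\G$ contains a dense $G_\delta$.

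For the forward implication, I would assume $X$ is topologically universal and fix an arbitrary comeager set $H \subset \G$. By Lemma \ref{lem:1} applied to the Baire space $\G$, $H$ contains a dense $G_\delta$ set $G$. Topological universality then yields an affine transformation $f$ with $f(X) \subset G$, and since $G \subset H$ we conclude $f(X) \subset H$. As $H$ was arbitrary, $X$ is universal in the family of comeager sets.

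For the reverse implication, I would assume $X$ is universal in comeager sets and fix an arbitrary dense $G_\delta$ set $G \subset \G$. By Lemma \ref{lem:1}, every dense $G_\delta$ set in a Baire space is itself comeager, so $G$ is comeager; the universality hypothesis then produces an affine transformation $f$ with $f(X) \subset G$. As $G$ was arbitrary, $X$ is topologically universal, which completes the equivalence.

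The point I expect to require the most care is not in the logic but in confirming that no property of affine maps beyond their mere existence is needed. Unlike the $\R$ case, where one occasionally rewrites $\lambda X + t$ by hand, here the containments $f(X) \subset G \subset H$ chain together directly, so the argument is insensitive to the structure of $\text{Aut}(\G)$ and of the left translations. Consequently the only genuine ingredient is the interchangeability of dense $G_\delta$ sets and comeager sets furnished by Lemma \ref{lem:1}, valid in any Baire space, so the generalization from $\R$ to locally compact Polish groups costs nothing beyond recording that $\G$ is Baire.
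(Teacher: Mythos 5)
Your proposal is correct and follows exactly the paper's route: observe that a locally compact Polish group is a Baire space so Lemma \ref{lem:1} applies, and then run the two containment arguments of Lemma \ref{lem:2} with general affine transformations in place of the maps $x \mapsto \lambda x + t$. The paper leaves this last step as "the same logic as in Lemma \ref{lem:2}," and your write-up simply makes that logic explicit, including the (correct) observation that nothing about $\mathrm{Aut}(\G)$ beyond the existence of the affine maps is used.
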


Next, we need the generalizations of strong measure zero sets.
\begin{definition}[]
\begin{enumerate}
\item A metric space $X$ is said to be \textbf{strong measure zero} if for every sequence $(\epsilon_n)_{n=1}^\infty$ of positive reals, there exists a sequence $(X_n)_{n=1}^\infty$ of subsets of $X$ such that $\text{diam}(X_n)<\epsilon_n$ and $X=\bigcup_{n=1}^\infty X_n$.
\item A subset $X$ of a topological group $\G$ is called \textbf{Rothberger bounded} if for each sequence $(U_n)_{n\in\N}$ of neighborhoods of ${\mathbf 1}_\G$, there exists a sequence $(g_n)_{n\in\N}$ such that $X\subset \bigcup_{n\in\N} \left(g_n\cdot U_n\right)$.

\end{enumerate}
\end{definition}


We first restrict our attention to Polish groups since the notion of Rothberger boundedness coincides with the notion of strong measure zero sets of any compatible metric. This is a consequence of a result by Fremlin \cite[Corollary 534G]{fremlin}.
\begin{theorem*}{C}[Fremlin]\label{thm:C}
    If a topological group $\G$ is a Polish group, then a subset $X$ of $\G$ is Rothberger bounded if and only if $X$ is of strong measure zero in $\G$ with respect to every left-invariant metric on $\G$ that induces the topology of $\G$. 
\end{theorem*}
Hence, we make the following definition.
\begin{definition}[]\label{def:strong-measure-zero-subset-on-polish-group}\index{}
A subset of a Polish group is a \textbf{strong measure zero set} if it is Rothberger bounded.
\end{definition}

The cardinality of strong measure zero subsets of Polish groups and strong measure zero metric spaces are closely related to the cardinality of strong measure zero sets in the real line, as shown in the following theorem.



\begin{theorem*}{D}\label{thm:D}
The followings are equivalent.
\begin{enumerate}
\item Borel conjecture (every strong measure zero subset of $\R$ is countable).
\item Every strong measure zero metric space is countable.
\item Every strong measure zero subset of a Polish group is countable.
\end{enumerate}
\begin{proof}
   $(1\Rightarrow 2)$ This is a result by Carlson \cite[Theorem 3.2]{carlson}, which shows that if $X$ is an uncountable strong measure zero metric space, then there exists a Lipschitz embedding from $X$ into $\R$. Using an elementary analysis argument,  we can see that images of strong measure zero sets under uniformly continuous functions are strong measure zero sets. As Lipschitz embedding is uniformly continuous, the proof is complete.
   
   $(2\Rightarrow 3)$ Suppose every strong measure zero metric space is countable. Let $X$ be a strong measure zero subset of a Polish group $\G$. By Definition \ref{def:strong-measure-zero-subset-on-polish-group}, $X$ is Rothberger bounded. By Theorem \ref{thm:C}, for any left-invariant metric $d$ on $\G$ that induces the topology of $\G$, $X$ is a strong measure zero metric space. By assupmtion, $X$ is countable.
   
   $(3\Rightarrow 1)$ This is true since $\R$ is a Polish group.
\end{proof}
\end{theorem*}

Recall that the Galvin-Mycielski-Solovay (Theorem \ref{thm:GMS}) provides a profound characterization of strong measure zero sets in $\R$ in terms of sumsets. A natural question is the following: in generalizing strong measure zero sets, to what extent of spaces do we have an analogue of Galvin-Mycielski-Solovay theorem? It turns out that the answer is locally compact Polish groups, as described in the following theorem by Kysiak in \cite{kysiak} and independently by Fremlin in \cite{fremlin}. For another source of proof, see \cite[Theorem 3.5]{hrusak-zindulka}.

\begin{theorem*}{E}[Kysiak-Fremlin]\label{thm:gms-locally-compact-polish-group}
    If $\G$ is a locally compact Polish group, a subset $X$ of $\G$ is a strong measure zero set if and only if $X\cdot M\neq \G$ for each meager set $M$ in $\G$.
\end{theorem*}
The Galvin-Mycielski-Solovay theorem (Theorem \ref{thm:gms-locally-compact-polish-group}) does not hold for general Polish groups. For example, in \cite[Theorem 3.10]{hrusak-2016}, it was shown that, assuming $\text{cov}(\mathcal{M})=\mathfrak{c}$, the Baer-Specker group $\Z^\N$ is an example of a Polish group that that is not locally compact and that does not satisfy the Galvin-Mycielski-Solovay theorem. Based on these observations, Hrušák and Zindulka conjectured that if we assume $\mathsf{CH}$, then the Galvin-Mycielski-Solovay theorem holds in a Polish group $\G$ and only if $\G$ is locally compact \cite[Conjecture 3.6]{hrusak-zindulka}.

We now prove a sumset characterization of topologically universal sets in locally Polish groups, namely, a generalization of Lemma \ref{lem:3} from $\R$ to any locally compact Polish group.
\begin{lemma}
    Let $\G$ be a locally compact Polish group. A subset $X$ in $\G$ is topologically universal if and only if for every meager set $M$ in $\G$, there exists $\varphi\in\mathrm{Aut}(\G)$ such that $X\cdot\varphi^{-1}(M)\neq \G$ 
    \label{sumset-top-group}
\end{lemma}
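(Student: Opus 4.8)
The plan is to mirror the proof of Lemma \ref{lem:3} for $\R$, with the multiplicative group operation and automorphisms of $\G$ playing the roles of translation and scaling. By Lemma \ref{lem:3.4}, topological universality of $X$ is equivalent to universality of $X$ in the family of comeager sets, so it suffices to reformulate the statement ``for every comeager $C$ there is an affine $f$ with $f(X)\subset C$'' as a sumset non-covering statement. Writing an affine map as $f(g)=h\cdot\varphi(g)$ with $\varphi\in\mathrm{Aut}(\G)$ and $h\in\G$, and passing to complements via $M=\G\setminus C$ (which is meager exactly when $C$ is comeager), the inclusion $h\cdot\varphi(X)\subset C$ becomes the statement that $M$ contains no point of the form $h\cdot\varphi(x)$.

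For the only-if direction, I would start from a meager set $M$ and apply Lemma \ref{lem:3.4} to the comeager set $\G\setminus M$ to obtain $\varphi\in\mathrm{Aut}(\G)$ and $h\in\G$ with $h\cdot\varphi(X)\cap M=\varnothing$. Thus for all $x\in X$ and $y\in M$ we have $h\cdot\varphi(x)\neq y$, equivalently $\varphi(x)\neq h^{-1}\cdot y$, equivalently (applying the homomorphism $\varphi^{-1}$, which is again an automorphism by Remark \ref{rmk:2}(1)) $x\neq\varphi^{-1}(h^{-1}\cdot y)=\varphi^{-1}(h^{-1})\cdot\varphi^{-1}(y)$. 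Hence no element of $X\cdot\varphi^{-1}(M)$ equals $\varphi^{-1}(h^{-1})^{-1}$, so $X\cdot\varphi^{-1}(M)\neq\G$, giving the desired $\varphi$. The if-direction reverses these manipulations: given $\varphi\in\mathrm{Aut}(\G)$ with $X\cdot\varphi^{-1}(M)\neq\G$, pick a witness $a\notin X\cdot\varphi^{-1}(M)$, and unwind the inequalities $x\cdot\varphi^{-1}(y)\neq a$ into $h\cdot\varphi(x)\notin M$ for a suitable $h$, producing an affine copy of $X$ inside $\G\setminus M$ and concluding topological universality through Lemma \ref{lem:3.4}.

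The main point requiring care, in contrast to the abelian case of Lemma \ref{lem:3}, is the bookkeeping of noncommutativity: one must keep the left- versus right-translation structure straight and be precise about where $h^{-1}$ and the inverses land when moving $\varphi$ and $\varphi^{-1}$ across a product. I would verify that $\varphi^{-1}\in\mathrm{Aut}(\G)$ at the one place it is used (Remark \ref{rmk:2}(1)), and note that $\varphi^{-1}$ is a homeomorphism (Remark \ref{rmk:2}(2)) so that it carries meager sets to meager sets and comeager sets to comeager sets; this ensures that the sets $\varphi^{-1}(M)$ appearing on the sumset side are genuinely meager and that the correspondence between the two formulations preserves the relevant category hypotheses. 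Beyond this careful but routine algebra, no further obstacle arises, since the category-theoretic content is entirely supplied by Lemma \ref{lem:3.4}.
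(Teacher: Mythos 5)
Your overall strategy (reduce to comeager sets via Lemma \ref{lem:3.4}, then translate the avoidance condition into a sumset statement) is the same as the paper's, but the algebra in your only-if direction contains a genuine error, and it occurs at exactly the point you yourself flagged as requiring care. From $h\cdot\varphi(x)\neq y$ for all $x\in X$, $y\in M$ you correctly derive $x\neq\varphi^{-1}(h^{-1})\cdot\varphi^{-1}(y)$. But this does not imply that $X\cdot\varphi^{-1}(M)$ omits the point $\varphi^{-1}(h^{-1})^{-1}$: the condition $x\cdot\varphi^{-1}(y)\neq\varphi^{-1}(h)$ is equivalent to $x\neq\varphi^{-1}(h)\cdot\varphi^{-1}(y)^{-1}=\varphi^{-1}(hy^{-1})$, whereas what you know is $x\neq\varphi^{-1}(h^{-1}y)$. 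These are different conditions even in the abelian case (additively: you know $x\neq y-h$ but need $x\neq h-y$), so $X\cdot\varphi^{-1}(M)\neq\G$ does not follow. What your derivation actually yields is $X\cap\bigl(\varphi^{-1}(h^{-1})\cdot\varphi^{-1}(M)\bigr)=\varnothing$, which is equivalent to $\varphi^{-1}(h^{-1})\notin X\cdot\varphi^{-1}(M^{-1})$ --- a statement about $M^{-1}$, not about $M$. The same defect sinks your if-direction: unwinding $x\cdot\varphi^{-1}(y)\neq a$ gives $y\neq\varphi(x)^{-1}\cdot\varphi(a)$, so the set avoiding $M$ is $\varphi(X)^{-1}\cdot\varphi(a)$, which is not an affine image of $X$ in the paper's sense, since $g\mapsto g^{-1}$ is not an automorphism of a nonabelian group; you cannot ``reverse the manipulations'' into $h\cdot\varphi(X)\subset M^{c}$.

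The missing idea --- and the one ingredient of the paper's proof beyond Lemma \ref{lem:3.4} --- is to apply universality not to $M$ but to $M^{-1}=\{y^{-1}:y\in M\}$, which is meager because group inversion is a homeomorphism. Starting from $h\cdot\psi(X)\cap M^{-1}=\varnothing$, the chain $h\psi(x)\neq y^{-1}$ iff $x\neq\psi^{-1}(h^{-1})\cdot\psi^{-1}(y)^{-1}$ iff $x\cdot\psi^{-1}(y)\neq\psi^{-1}(h^{-1})$ now lands on the correct conclusion $X\cdot\psi^{-1}(M)\neq\G$; the if-direction is repaired symmetrically by invoking the hypothesis for the meager set $M^{-1}$. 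In $\R$ (Lemma \ref{lem:3}) this issue is invisible because $y\mapsto-y$ is itself a scaling and can be absorbed into the choice of $\lambda$; in a general group, inversion is only an anti-automorphism, so it must be applied to the meager set before invoking universality rather than folded into $\varphi$. Your proposal never performs this inversion, so as written both directions fail; with that one substitution, your computations become essentially the paper's proof.
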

\begin{proof}
The proof is analogous to that of Lemma \ref{lem:3}. We first demonstrate the only if part. Suppose $X$ is a topologically universal subset of $\G$. Let $M$ be a meager subset of $\G$. Then, since group inversion is a homeomorphism and homeomorphism maps meager sets to meager sets, $M^{-1}=\{x^{-1}:x\in M\}$ is a meager subset of $\G$. By Lemma \ref{lem:3.4}, there exists $\psi\in\text{Aut}(\G)$ and $h\in \G$ such that $h\cdot \psi (X) \subset (M^{-1})^c$, i.e., $(h\cdot \psi(X)) \cap M^{-1}=\varnothing$. This implies that for all $x\in X$ and $y\in M$, $h\cdot \psi(x) \neq y^{-1}$, i.e., $x\cdot(\psi^{-1}(y))\neq \psi^{-1}(h^{-1})$. Hence, $\psi^{-1}(h^{-1})\not\in X\cdot \psi^{-1}(M)$, i.e., $X\cdot \psi^{-1}(M)\neq \G$. By taking $\varphi=\psi^{-1}\in \text{Aut}(\G)$, we have $X\cdot \varphi(M)\neq \G$.

Next, we show the if part. Let $X$ be a subset of $\G$ and suppose that for every meager set $N$ in $\G$, there exists $\psi\in\text{Aut}(\G)$ such that $X\cdot \psi^{-1}(N)\neq \G$. Let $M$ be a meager subset of $\G$. Again, since group inversion is a homeomorphism and homeomorphism maps meager sets to meager sets, $M^{-1}$ is a meager subset of $\G$. Thus, there exists $\psi\in\text{Aut}(\G)$ such that $X\cdot \psi^{-1}(M^{-1})\neq \G$. This implies that there exists $g\in \G$ such that for all $x\in X$ and $y\in M$, $x\cdot \psi(y^{-1})\neq g$, i.e., $\psi^{-1}(g^{-1})\cdot \psi^{-1} (x) \neq y$. Hence, by letting $\varphi=\psi^{-1}\in \text{Aut}(\G)$ and $h=\psi^{-1}(g^{-1})$, we have $(h\cdot \varphi(X))\cap M = \varnothing$, i.e., $h\cdot \varphi(X)\subset M^c$. Therefore, $X$ is universal in comeager subsets of $\G$, and by Lemma \ref{lem:3.4}, $X$ is topologically universal.
\end{proof}

We can now classify all topologically universal sets in a locally compact Polish group in analogy with Theorem \ref{thm:R-top-univ}.
\begin{theorem}
    \label{classification-group}
    A subset in a locally compact Polish group $\G$ is topologically universal if and only if it is a strong measure zero set.
\end{theorem}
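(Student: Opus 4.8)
The plan is to mirror the proof of Theorem \ref{thm:R-top-univ}, replacing the Galvin--Mycielski--Solovay theorem on $\R$ (Theorem \ref{thm:GMS}) with its locally compact Polish group analogue (Theorem \ref{thm:gms-locally-compact-polish-group}), and replacing Lemma \ref{lem:3} with its group version, Lemma \ref{sumset-top-group}. These two ingredients are precisely the generalizations assembled in this section, so the argument should be structurally identical.

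For the \emph{if} direction, suppose $X$ is a strong measure zero set in $\G$. By Theorem \ref{thm:gms-locally-compact-polish-group}, $X\cdot M\ne\G$ for every meager set $M$ in $\G$. Given any meager set $M$, I would simply take $\varphi$ to be the identity automorphism $\mathrm{id}\in\mathrm{Aut}(\G)$, so that $X\cdot\varphi^{-1}(M)=X\cdot M\ne\G$; Lemma \ref{sumset-top-group} then immediately yields that $X$ is topologically universal. This is the exact analogue of taking $\lambda=1$ in the $\R$ proof.

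For the \emph{only if} direction, suppose $X$ is topologically universal. I want to show $X$ is strong measure zero, i.e., by Definition \ref{def:strong-measure-zero-subset-on-polish-group}, Rothberger bounded. The cleaner route is to use Theorem \ref{thm:gms-locally-compact-polish-group} again: it suffices to show $X\cdot M\ne\G$ for every meager $M$. Fix a meager set $M$. By Lemma \ref{sumset-top-group}, there exists $\varphi\in\mathrm{Aut}(\G)$ with $X\cdot\varphi^{-1}(M)\ne\G$. Since $\varphi$ is an automorphism, it is a homeomorphism by Remark \ref{rmk:2}(2), so $\varphi^{-1}(M)$ is again a meager set; thus Lemma \ref{sumset-top-group} actually tells us that for \emph{some} meager set (namely $\varphi^{-1}(M)$) the sumset with $X$ avoids a point. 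The subtlety is that I need $X\cdot N\ne\G$ for \emph{every} meager $N$, not just for one obtained from $M$ via some automorphism. But this is no obstacle: given an arbitrary meager $N$, apply Lemma \ref{sumset-top-group} to the meager set $\varphi(N)$ for the appropriate $\varphi$ it produces, and observe that the automorphism invariance of meagerness makes the family of meager sets closed under $\varphi^{-1}$. Concretely, since every $\varphi\in\mathrm{Aut}(\G)$ is a homeomorphism, $M\mapsto\varphi^{-1}(M)$ is a bijection of the collection of meager sets onto itself, so ``$X\cdot\varphi^{-1}(M)\ne\G$ for some $\varphi$ depending on $M$'' for all meager $M$ is equivalent to ``$X\cdot N\ne\G$ for all meager $N$.'' Hence $X$ satisfies the hypothesis of Theorem \ref{thm:gms-locally-compact-polish-group} and is therefore a strong measure zero set.

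The main obstacle is the bookkeeping in the only if direction: one must be careful that Lemma \ref{sumset-top-group} gives an automorphism $\varphi$ that depends on the meager set, whereas Theorem \ref{thm:gms-locally-compact-polish-group} requires the sumset condition uniformly over all meager sets. The resolution hinges entirely on the fact that automorphisms are homeomorphisms (Remark \ref{rmk:2}(2)), which guarantees that the meager $\sigma$-ideal is preserved under $\varphi$ and $\varphi^{-1}$, so that reparametrizing the meager set by an automorphism does not change the quantifier ``for all meager sets.'' Everything else is a direct transcription of the $\R$ argument, and no new ideas beyond the already-established Lemmas \ref{lem:3.4} and \ref{sumset-top-group} and Theorem \ref{thm:gms-locally-compact-polish-group} are needed.
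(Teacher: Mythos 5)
Your \emph{if} direction is exactly the paper's argument (take $\varphi=\mathrm{id}_\G$ in Lemma \ref{sumset-top-group} and invoke Theorem \ref{thm:gms-locally-compact-polish-group}), and it is fine. The \emph{only if} direction, however, has a genuine gap, located precisely at the point you call ``the main obstacle.'' Lemma \ref{sumset-top-group} gives: for every meager $M$ there exists $\varphi\in\mathrm{Aut}(\G)$, \emph{depending on} $M$, with $X\cdot\varphi^{-1}(M)\neq\G$. You need: for every meager $N$, $X\cdot N\neq\G$. Your bridge --- ``apply Lemma \ref{sumset-top-group} to the meager set $\varphi(N)$ for the appropriate $\varphi$ it produces'' --- is circular: the automorphism is produced by the lemma only \emph{after} the meager set is chosen, so you cannot feed in $\varphi(N)$ for the $\varphi$ that the lemma will output. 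The follow-up claim, that $M\mapsto\varphi^{-1}(M)$ being a bijection of the meager $\sigma$-ideal makes ``$\forall M\,\exists\varphi:\,X\cdot\varphi^{-1}(M)\neq\G$'' equivalent to ``$\forall N:\,X\cdot N\neq\G$,'' is a quantifier error: the reindexing argument is valid only when a \emph{single} $\varphi$ works for all $M$ (i.e., for ``$\exists\varphi\,\forall M$''), not for ``$\forall M\,\exists\varphi$.'' A toy instance of the failed scheme: for every integer $n$ there is a translation of $\Z$ carrying $n$ to $0$, and translations are bijections of $\Z$, yet not every integer is $0$. Indeed, if your manipulation were sound, the only-if direction of Theorem \ref{thm:R-top-univ} would follow formally from Lemma \ref{lem:3} and Theorem \ref{thm:GMS} (with $\lambda M$ in place of $\varphi^{-1}(M)$) with no work at all; removing the dependence of the affine distortion on the meager set is exactly the nontrivial content of the theorem.

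What the paper does instead is prove Rothberger boundedness (Definition \ref{def:strong-measure-zero-subset-on-polish-group}) \emph{directly}, bypassing Theorem \ref{thm:gms-locally-compact-polish-group} in this direction: given a sequence $(U_n)$ of neighborhoods of ${\mathbf 1}_\G$, fix a countable neighborhood basis $(B_k)$ at ${\mathbf 1}_\G$ and a dense sequence $(g_n)$, form the dense $G_\delta$ set $G=\bigcap_{k}\bigcup_{n} g_n\cdot(B_k\cap U_n)$, apply topological universality to get $h\cdot\varphi(X)\subset G$, and then undo the affine map: since $\varphi$ is a homeomorphism (Remark \ref{rmk:2}), $\{\varphi^{-1}(B_k)\}$ is again a neighborhood basis, and the intersection over $k$ built into $G$ is what absorbs the distortion by $\varphi^{-1}$, yielding $X\subset\bigcup_n g_n'\cdot U_n$ with $g_n'=\varphi^{-1}(h^{-1}\cdot g_n)$. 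This plays the same role as the parameter $k$ and the choice $k_0=\lceil 2/|\lambda|\rceil$ in the proof of Theorem \ref{thm:R-top-univ}. Your proof can be repaired by replacing the GMS-based shortcut with this (or an equivalent) covering argument; as written, the only-if direction does not go through.
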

\begin{proof}
    We first prove the if part. Suppose $X$ is a strong measure zero set in $\G$. Then, by Theorem  \ref{thm:gms-locally-compact-polish-group}, for every meager subset of $\G$, we have $X\cdot M\neq \G$. By choosing $\varphi$ to be the identity map $\mathrm{id}_{\G}$ on $\G$ in Lemma \ref{sumset-top-group}, $X$ is topologically universal in $\G$.

    \medskip
    
    Next, we prove the only if part. Suppose $X$ is topologically universal. We are going to show that $X$ is Rothberger bounded.  Let $(U_n)_{n\in\N}$ be a sequence of neighborhoods of ${\bf 1}_\G$. Since $\G$ is a Polish group, there exists a countable neighborhood basis $\mathcal{B}=\{B_n: n\in \N\}$ of ${\mathbf 1}_\G$. Since $\G$ is separable, letting $(g_n)_{n=1}^\infty$ be an enumeration of a dense subset of $\G$, $\bigcup_{n=1}^\infty g_n\cdot U_n$ is a dense open subset of $\G$. Define
    \begin{align*}
        G=\bigcap_{k=1}^\infty \bigcup_{n=1}^\infty g_n\cdot \left(B_k \cap U_n\right).
    \end{align*}
    Since $G$ is a dense $G_\delta$ set, there exists an affine map $f=h \cdot \varphi$ on $\G$ for some $\varphi\in \mathrm{Aut}(\G)$ and $h\in\G$ such that
    \begin{align*}
        f(X)=h \cdot \varphi(X) \subset G.
    \end{align*}
    This implies that
    \begin{align*}
        X \subset \bigcap_{k=1}^\infty \bigcup_{n=1}^\infty \left[\left(\varphi^{-1}(h^{-1}\cdot g_n)\right)\cdot \left(\varphi^{-1} (B_k) \bigcap \varphi^{-1}(U_n)\right)\right].
    \end{align*}
    Since a Haar measurable automorphism in a locally compact Polish group is a homeomorphism, $\varphi$ is a homeomorphism (Remark \ref{rmk:2}). Since homeomorphisms preserve neighborhood bases, $\{\varphi^{-1}(B_k):k\in\N\}$ is a neighborhood basis of ${\bf 1}_\G$. Thus, there exists $k\in\N$ such that $\varphi^{-1}(B_k) \subset U_n$. Hence,
    \begin{align*}
        X &\subset \bigcup_{n=1}^\infty \left[\left(\varphi^{-1}(h^{-1} \cdot g_n)\right)\cdot \left(\varphi^{-1} (B_k) \bigcap \varphi^{-1}(U_n)\right)\right]\\
        &\subset \bigcup_{n=1}^\infty \left(\varphi^{-1}(h^{-1}\cdot g_n)\right)\cdot U_n = \bigcup_{n=1}^\infty g'_n \cdot U_n,
    \end{align*}
    where $g'_n=\varphi^{-1}(h^{-1}\cdot (g_n))$. Therefore, $X$ is a strong measure zero set in $\G$.
\end{proof}

The following corollary, which states that Conjecture \ref{conj:3} is independent of \textup{\textsf{ZFC}}, is now immediate. 

\begin{corollary}
    The existence of an uncountable strong measure zero set in a locally compact Polish group is independent of \textup{\textsf{ZFC}}.
    \begin{proof}
        This directly follows from Theorem \ref{classification-group} and the Borel conjecture for locally compact Polish groups (Theorem \ref{thm:D}).
    \end{proof}
\end{corollary}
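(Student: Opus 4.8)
The plan is to reduce the independence of this statement to the independence of the classical Borel conjecture, which is already recorded (via Sierpiński and Laver) in the introduction. Write $S$ for the assertion ``some locally compact Polish group contains an uncountable strong measure zero set,'' so that $\neg S$ reads ``every strong measure zero subset of every locally compact Polish group is countable.'' The heart of the argument is to establish the equivalence of $\neg S$ with the Borel conjecture, after which the independence of $S$ is immediate.

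For the forward direction I would assume the Borel conjecture and invoke Theorem~\ref{thm:D}, whose implication $(1)\Rightarrow(3)$ gives that every strong measure zero subset of an arbitrary Polish group is countable. Since every locally compact Polish group is in particular a Polish group, this is exactly $\neg S$. For the reverse direction I would use that $\R$ is itself a locally compact Polish group; hence $\neg S$ specializes to the statement that every strong measure zero subset of $\R$ is countable, which is the Borel conjecture. These two directions together yield $\neg S \Leftrightarrow \text{Borel conjecture}$, equivalently $S \Leftrightarrow \neg(\text{Borel conjecture})$.

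With this equivalence in hand, the independence is a direct transcription of the two consistency results cited in the introduction. Laver's consistency of the Borel conjecture gives a model of \textsf{ZFC} in which $\neg S$ holds, so $S$ is not provable from \textsf{ZFC}; Sierpiński's consistency of the negation of the Borel conjecture gives a model in which $S$ holds, so $\neg S$ is not provable. Hence $S$ is independent of \textsf{ZFC}. To align this with the topological formulation, I would finally note that Theorem~\ref{classification-group} identifies topologically universal sets with strong measure zero sets, so $\neg S$ is precisely Conjecture~\ref{conj:3}; the same argument therefore shows Conjecture~\ref{conj:3} is independent of \textsf{ZFC}.

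I do not anticipate a genuine obstacle: the entire content sits in Theorems~\ref{thm:D} and~\ref{classification-group} together with the known independence of the Borel conjecture. The only care required is the bookkeeping of the two logical directions of the equivalence $\neg S \Leftrightarrow \text{Borel conjecture}$ — using the inclusion of locally compact Polish groups among all Polish groups for the forward direction, and the membership of $\R$ in the former class for the reverse direction.
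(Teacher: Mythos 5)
Your proposal is correct and takes essentially the same route as the paper: the paper's one-line proof invokes precisely Theorem~\ref{thm:D} (the equivalence of the Borel conjecture with the countability of strong measure zero subsets of Polish groups) together with Theorem~\ref{classification-group} and the known independence of the Borel conjecture. Your write-up merely makes explicit what the paper leaves implicit --- the two directions of the equivalence (locally compact Polish groups are Polish groups for one direction, $\R$ is locally compact for the other) and the Sierpi\'nski/Laver consistency results.
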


\section{Measure-Category Duality and Open Questions}\label{section:4}
\subsection{Measure-Category duality.} In the  literature of descriptive set theory and mathematical logic, several results in measure theory are transferred to analogous results in category (meaning Baire category), and vice versa, by the notion of measure-category duality.
\begin{theorem*}{K}[Erd\H{o}s-Sierpi\'{n}ski]\label{thm:erdos-sierpinski-mapping}
    Assuming the continuum hypothesis (\textup{\textsf{CH}}), there exists a bijection $f:\R\to\R$ such that $f\circ f$ is the identity map on $\R$ and $X$ is a Lebesgue measure zero subset of $\R$ if and only if $f(X)$ is a meager subset of $\R$.
\end{theorem*}


The mapping in Theorem \ref{thm:erdos-sierpinski-mapping} is referred to as the Erd\H{o}s-Sierp\'{n}ski mapping. The measure-category duality is a principle that suggests if a statement that is formulated in terms of Lebesgue measure zero sets and/or meager sets holds true, then its dual statement, which is obtained by replacing Lebesgue measure zero sets with meager sets in the statement and vice versa, also holds true. It is noteworthy that although the dual statement is easy to obtain, the Erd\H{o}s-Sierpi\'{n}ski mapping cannot be used as a direct proof, thus often requiring a different proof. This is because the Erd\H{o}s-Sierpi\'{n}ski mapping requires \textsf{CH} and it does not preserve addition \cite{Kysiak-another-note-on-duality}, which implies that it cannot directly prove any dual statements involving addition. Despite this, this phenomenon of measure-category duality has been used in various situations \cite{Oxtoby-1980}.

\medskip

Until now, we have classified all topologically universal sets and resolved the topological variant of the Erd\H{o}s similarity conjecture posed by Gallagher, Lai, and Weber in $\R$ and in locally compact Polish groups. In this section, we use the principle of duality between measure and category to translate our results in the topological (category) setting to the measure-theoretic setting.

\subsection{Full measure universality.} We begin by introducing the dual notion of topologically universal sets.
\begin{definition}
    A subset of $\R$ is {\bf full measure universal} if it is universal in the family of all Lebesgue measurable sets with full measure, where a set $E$ has full measure if $\R\setminus E$ has zero Lebesuge measure.
\end{definition}

Erd\H{o}s-Sierp\'{n}ski mapping suggests that measure zero sets and meager sets can be regarded as dual objects under $\mathsf{CH}$. Taking the complement, sets with full measures and comeager sets are dual to each other. Hence, we derive the following conjecture as the dual of Conjecture \ref{conj:gallagher-lai-weber}.
\begin{conjecture}\label{conj:full-measure-erdos} There is no uncountable full measure universal subset in $\R$.
\end{conjecture}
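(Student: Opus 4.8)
The plan is to mirror the development of Section \ref{section:2}, with the null ideal playing the role of the meager ideal. Since full measure universality is defined directly in terms of the family of full measure sets, the analogues of Lemma \ref{lem:1} and Lemma \ref{lem:2} are not needed; the first genuine step is a dual of Lemma \ref{lem:3}. I would prove that \emph{a subset $X$ of $\R$ is full measure universal if and only if for every Lebesgue null set $N$ there exists $\lambda\in\R\setminus\{0\}$ with $X+\lambda N\neq\R$.} The argument is verbatim that of Lemma \ref{lem:3}, with ``meager'' replaced by ``null'' throughout; the only new input is that the affine maps $x\mapsto\lambda x+t$ preserve the null ideal (Lebesgue measure scales by $|\lambda|$), so that an affine copy of $X$ lands in the full measure set $N^c$ exactly when a translate of $X$ misses a scaled copy of $N$.

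Having this, the next observation is that the sumset condition with $\lambda=1$, namely $X+N\neq\R$ for every null $N$, is precisely the definition of a \textbf{strongly meager set}. In other words, the measure-side counterpart of the Galvin--Mycielski--Solovay theorem (Theorem \ref{thm:GMS}) is, on this side, a definition rather than a deep theorem. Consequently the easy half of the desired characterization is immediate: if $X$ is strongly meager, then taking $\lambda=1$ in the dual of Lemma \ref{lem:3} shows $X$ is full measure universal. This parallels the ``if'' direction of Theorem \ref{thm:R-top-univ}, but here without invoking any analogue of Theorem \ref{thm:GMS}.

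The hard direction---and the step I expect to be the main obstacle---is the converse: \emph{full measure universal $\Rightarrow$ strongly meager}, i.e.\ that the scalar $\lambda$ furnished by the dual of Lemma \ref{lem:3} can always be taken to be $1$. Following the only-if part of Theorem \ref{thm:R-top-univ}, one would fix a null set $N$ and try to build a full measure set $E=E(N)$ so rigid that any affine copy $\lambda X+t\subset E$ forces $X+N\neq\R$. Unwinding this requirement shows that the null set $Z=\R\setminus E$ must contain a translate of $\lambda N$ for \emph{every} $\lambda\neq0$. This is exactly where the analogy breaks down relative to the category case: strong measure zero is defined through sequences of admissible interval lengths, so the explicit dense $G_\delta$ set $\bigcap_k\bigcup_n(q_n-\epsilon_n/k,q_n+\epsilon_n/k)$ used in Theorem \ref{thm:R-top-univ} absorbs the unknown $\lambda$ through the discrete thinning parameter $k$. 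A strongly meager set is instead tested against a single, \emph{fixed} null set $N$, so there is no soft length parameter to rescale, and one is forced to pack translates of the entire continuum family $\{\lambda N:\lambda\neq0\}$ into one null set. Producing such a null set (or reducing the range of $\lambda$ to a countable set, via the affine invariance of the null ideal, so that a countable-intersection argument survives) is the crux; it is intimately related to how many affine copies of a prescribed set a null set may contain, which sits at the heart of the Erd\H{o}s similarity phenomenon. This is why I would state the characterization as a conjecture rather than a theorem.

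Finally, granting the characterization \emph{full measure universal $\Leftrightarrow$ strongly meager} in \textsf{ZFC}, Conjecture \ref{conj:full-measure-erdos} becomes equivalent to the dual Borel conjecture, that every strongly meager set is countable. The latter is known to be independent of \textsf{ZFC}: uncountable strongly meager sets exist under \textsf{CH}, while the consistency of the dual Borel conjecture is also known (cf.\ \cite{carlson}). Hence Conjecture \ref{conj:full-measure-erdos} would be independent of \textsf{ZFC} as well, exactly paralleling Corollary \ref{cor:1.indep}. Thus, modulo the scaling-absorption step identified above, the resolution of Conjecture \ref{conj:full-measure-erdos} is a routine transcription of the category case together with the cited independence results.
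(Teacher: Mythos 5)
Your proposal matches the paper's own treatment of this statement: in the paper it is likewise a conjecture, not a theorem, and the paper does exactly what you do — it proves the sumset characterization and the easy direction (Proposition \ref{lem:sumset-full-measure}: strongly meager implies full measure universal, with $\lambda=1$), leaves the converse as the open Conjecture \ref{conj:full-measure-iff-strongly-meager}, and observes that its validity would make Conjecture \ref{conj:full-measure-erdos} independent of \textsf{ZFC} via the independence of the dual Borel conjecture. Your diagnosis of why the only-if argument of Theorem \ref{thm:R-top-univ} does not transcribe — a single fixed null set offers no thinning parameter to absorb the unknown scalar $\lambda$ — is precisely the obstruction the paper itself acknowledges when explaining why Theorem \ref{thm:erdos-kunen-mauldin} does not settle even the perfect set case.
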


The dual notions of strong measure zero sets and the Borel conjecture are as follows.
\begin{definition}
    A subset $X$ of $\R$ is called {\bf strongly meager} if $X+M\neq \R$ for every Lebesgue measure zero  set $M$ in $\R$. The {\bf dual Borel conjecture} is the statement that all strongly meager subsets of $\R$ are countable.
\end{definition}

Similar to the Borel conjecture, the dual Borel conjecture is also known to be independent of \textsf{ZFC}. The consistency of the negation of the dual Borel conjecture is due to Pawlikowski and Janusz \cite{pawlikowski-janusz}, where they showed that every Sierpi\'nski set, which is an uncountable subset of $\R$ that intersects every Lebesgue measure zero set countably, is a strongly meager set. Since Sierpi\'nski sets exist assuming \textsf{CH}, this proves the consistency of the negation of the dual Borel conjecture in \textsf{ZFC}. The consistency of the  dual Borel conjecture is due to Carlson. A detailed review of the dual Borel conjecture can be found in \cite[Chapter 8]{bartoszynski-Judah}, and recent generalizations and developments can be found in \cite{scheepers-zindulka}.

\begin{proposition}
    \label{lem:sumset-full-measure}
    A subset $X$ of $\R$ is full measure universal if and only if for every Lebesuge measure zero set $M$ in $\R$, there exists $\lambda\neq 0$ such that $X+\lambda M \neq \R$.
Consequently, strongly meager sets are full measure universal.
   \end{proposition}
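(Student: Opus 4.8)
The plan is to mirror the proof of Lemma~\ref{lem:3} essentially verbatim, since full measure universality stands to full measure sets exactly as topological universality stands to comeager sets. The crucial simplification is that, unlike the topological setting, no analogue of Lemma~\ref{lem:2} is required: by definition $X$ is full measure universal precisely when it is universal in the family of full measure (measurable) sets, and complementation gives an \emph{exact} bijection between measure zero sets and full measure sets. Indeed, if $\R\setminus E$ has measure zero then it is Lebesgue measurable by completeness of Lebesgue measure, so $E$ is automatically measurable; thus $M\mapsto M^c$ matches measure zero sets with full measure sets with no loss. All the remaining content is the sign-and-reciprocal bookkeeping in $\lambda$ that already appears in Lemma~\ref{lem:3}.

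For the only if direction, I would start from a full measure universal $X$ and an arbitrary measure zero set $M$. Applying universality to the full measure set $M^c$ produces $\lambda'\neq 0$ and $t\in\R$ with $\lambda' X+t\subset M^c$, i.e.\ $(\lambda' X+t)\cap M=\varnothing$. Rewriting $\lambda' x+t\neq y$ as $x-\tfrac{1}{\lambda'}y\neq -\tfrac{t}{\lambda'}$ for all $x\in X$ and $y\in M$ shows that $-\tfrac{t}{\lambda'}\notin X-\tfrac{1}{\lambda'}M$, so $X-\tfrac{1}{\lambda'}M\neq\R$; taking $\lambda=-\tfrac{1}{\lambda'}$ gives $X+\lambda M\neq\R$.

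For the if direction I would reverse this algebra. Given the sumset hypothesis and a full measure set $E$, apply it to the measure zero set $M=E^c$ to obtain $\lambda'\neq 0$ and a point $a\notin X+\lambda' M$, so that $x+\lambda' y\neq a$ for all $x\in X$ and $y\in M$. Setting $\lambda=\tfrac{1}{\lambda'}$ and $t=-\tfrac{a}{\lambda'}$ rearranges this into $(\lambda X+t)\cap M=\varnothing$, hence $\lambda X+t\subset E$, which is full measure universality. Finally, the ``consequently'' clause is immediate: if $X$ is strongly meager then $X+M\neq\R$ for \emph{every} measure zero set $M$, so the sumset condition holds with the single choice $\lambda=1$ for all $M$, and the equivalence just proved yields full measure universality.

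Since the argument is a direct transcription of Lemma~\ref{lem:3}, I do not expect a genuine obstacle. The only point demanding a moment's care is the claim that complementation is an exact duality here, which rests on the completeness of Lebesgue measure (guaranteeing that every full measure set is measurable); this is precisely what lets us dispense with a measure-theoretic analogue of Lemma~\ref{lem:2} and work directly from the definition of full measure universality.
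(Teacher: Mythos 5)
Your proposal is correct and is essentially the paper's own argument: the paper's proof literally states that the first part ``is identical to the proof of Lemma~\ref{lem:3} by substituting Lebesgue measure zero sets in place of meager sets,'' and it derives the strongly meager clause exactly as you do, by taking $\lambda=1$. The only detail to adjust---a sign slip inherited verbatim from the paper's Lemma~\ref{lem:3}---is in the ``if'' direction: from $x+\lambda' y\neq a$ for all $x\in X$, $y\in M$ one gets $\bigl(-\tfrac{1}{\lambda'}X+\tfrac{a}{\lambda'}\bigr)\cap M=\varnothing$, so the correct choice is $\lambda=-\tfrac{1}{\lambda'}$ and $t=\tfrac{a}{\lambda'}$ (equivalently, apply the hypothesis to $-M$, which is again measure zero).
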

 
   \begin{proof}
        The first part of the proof is identical to the proof of Lemma \ref{lem:3} by substituting Lebesgue measure zero sets in place of meager sets.

     To see the second part, let $X$ be a strongly meager set. Then by the definition of strongly meager set, $X$ satisfies the first part of the statement with $\lambda =1$. Thus, $X$ is a full measure universal set in $\R$.
    \end{proof}

Using strongly meager sets, it is natural to pose the following classification conjecture for full measure universal sets, which is the dual of Theorem \ref{thm:R-top-univ}.

\begin{conjecture}\label{conj:full-measure-iff-strongly-meager}
    A subset of $\R$ is full measure universal if and only if it is a strongly meager subset of $\R$.
\end{conjecture}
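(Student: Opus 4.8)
The plan is to split Conjecture \ref{conj:full-measure-iff-strongly-meager} into its two implications and observe that one of them is already in hand. The direction ``strongly meager $\Rightarrow$ full measure universal'' is precisely the second assertion of Proposition \ref{lem:sumset-full-measure}: a strongly meager set satisfies the sumset criterion with $\lambda = 1$, so it is full measure universal. Hence the entire content of the conjecture lies in the converse, namely \emph{every full measure universal set is strongly meager}. By Proposition \ref{lem:sumset-full-measure}, this amounts to upgrading the hypothesis ``for every null set $M$ there is \emph{some} $\lambda \neq 0$ with $X + \lambda M \neq \R$'' to the conclusion ``$X + M \neq \R$ for every null set $M$''; equivalently, one must force the scaling factor to be $\lambda = 1$.

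My approach would be to dualize the ``only if'' half of Theorem \ref{thm:R-top-univ}. There, given a target sequence $(\epsilon_n)$, one builds a single dense $G_\delta$ set $G = \bigcap_k \bigcup_n (q_n - \epsilon_n/k,\, q_n + \epsilon_n/k)$ lying in every ``level'' $k$ at once, applies topological universality to obtain $\lambda X + t \subset G$, and only afterwards selects $k_0 \geq 2/|\lambda|$ so that the induced covering of $X$ has intervals of length $\leq \epsilon_n$. In the measure setting the analogous plan is: fix an arbitrary null set $M$, construct a full measure set $E$ (equivalently a null set $N = E^c$) tailored to $M$, feed $E$ to the definition of full measure universality to obtain an affine copy $\lambda X + t \subset E$, and read off $X + M \neq \R$. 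Unwinding the set algebra, $\lambda X + t \subset E$ is equivalent to $X \cap \lambda^{-1}(N - t) = \varnothing$, whereas the desired conclusion $X + M \neq \R$ is equivalent to $X \cap (z - M) = \varnothing$ for some $z$. Thus it would suffice to arrange that $z - M \subset \lambda^{-1}(N - t)$ for a suitable $z$, i.e. that $N$ contains a translate of the scaled set $\lambda M$, since then $X$ inherits the disjointness.

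The heart of the matter, and the step I expect to be the main obstacle, is therefore the following purely measure-theoretic construction: \emph{given an arbitrary Lebesgue null set $M$, produce a single null set $N$ that contains a translate of the scaled copy $\lambda M$ for every nonzero $\lambda$}, the need for ``every $\lambda$'' arising because full measure universality hands us a scaling we cannot control in advance. This is exactly where the duality with Theorem \ref{thm:R-top-univ} breaks down. In the category case the unknown scaling was absorbed harmlessly, because the covering definition of strong measure zero is monotone under shrinking --- a finer level $k_0$ only makes the intervals smaller, which is always admissible --- and the intersection $\bigcap_k$ encodes all levels simultaneously. The notion of strongly meager, being a sumset condition rigidly tied to $\lambda = 1$, enjoys no such slack: there is no analogue of ``shrinking the intervals,'' and a union of scaled copies $\bigcup_{\lambda \neq 0}(c_\lambda + \lambda M)$ of a null set over all scales is typically non-null (for instance $\bigcup_{\lambda>0}\lambda C = [0,\infty)$ for the middle-thirds Cantor set $C$), so squeezing all scales into one null set $N$ by a clever choice of translations $c_\lambda$ is a delicate Besicovitch-type problem that may have no solution for general $M$.

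For these reasons I would not expect a direct \textsf{ZFC} proof to go through unobstructed, and the parallel with the Borel / dual-Borel dichotomy suggests the statement is genuinely sensitive to the set-theoretic universe. A realistic first target is to establish the conjecture under extra hypotheses --- for restricted classes of null sets $M$, or under \textsf{CH}, where Sierpi\'{n}ski sets furnish concrete strongly meager witnesses --- while bearing in mind that the Erd\H{o}s--Sierpi\'{n}ski mapping of Theorem \ref{thm:erdos-sierpinski-mapping} cannot be used to transfer Theorem \ref{thm:R-top-univ} directly, since it requires \textsf{CH} and does not preserve addition.
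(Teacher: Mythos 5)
The first thing to say is that the statement you were asked to prove is not a theorem of the paper: it is posed there as Conjecture \ref{conj:full-measure-iff-strongly-meager}, and the paper offers no proof --- indeed it remarks that the conjecture is open even for perfect sets. So your proposal can only be judged against the paper's partial results and commentary, and on that score it is accurate. The direction you dispose of (strongly meager $\Rightarrow$ full measure universal) is exactly the second assertion of Proposition \ref{lem:sumset-full-measure}, proved in the paper the same way (take $\lambda=1$ in the sumset criterion). The converse, which you correctly isolate as the entire content of the conjecture, is precisely what the authors cannot do either, and your diagnosis of the obstruction coincides with theirs: full measure universality hands you an uncontrolled scaling $\lambda$, so to conclude $X+M\neq\R$ one would need a single null set $N$ containing a translate of $\lambda M$ for every $\lambda\neq 0$. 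This is exactly why the paper observes that Theorem \ref{thm:erdos-kunen-mauldin} does not settle even the perfect-set case (``we do not know if the same $M$ works for all $\lambda\ne 1$'') and why it poses the scaling-robust Conjecture \ref{conj:full-meausre-perfect}. Your set-algebra reductions are all correct, as is your explanation of why the dualization of Theorem \ref{thm:R-top-univ} breaks: the covering definition of strong measure zero is monotone under refinement, while the strongly meager condition has no such slack.

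Two refinements. First, your suggestion that the conjecture is sensitive to the set-theoretic universe is anticipated in the paper, but via a different mechanism: by Bartoszy\'nski--Shelah, under \textsf{CH} strongly meager sets do not form a $\sigma$-ideal, so a positive answer to the paper's question (Qu) --- do full measure universal sets form a $\sigma$-ideal? --- would produce a model in which the conjecture fails, making its negation consistent; whether the conjecture itself is consistent is left open there. Second, your pessimism about packing scaled copies of a null set into a single null set is slightly overstated: the paper's final proposition does essentially this, taking the countable union $\bigcup_{j,k} a^j\left(K+\epsilon_0 k\right)$ of scaled translates (still null), the point being that only countably many scales $a^j$ are needed once one has stability of an interval-containment property for $t\in(1-\delta,1]$. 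What is genuinely missing --- there as in your sketch --- is any way to obtain such robustness for arbitrary sets, which is why the statement remains a conjecture rather than a theorem.
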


\begin{remark} We make few remarks on Conjectures \ref{conj:full-measure-erdos} and \ref{conj:full-measure-iff-strongly-meager}.
\begin{enumerate}
\item Note that Conjecture \ref{conj:full-measure-erdos} can also be viewed as the uncountable version of the original Erd\H{o}s similarity conjecture (Conjecture \ref{conj:original-erdos-similarity-conjecture}). To the best of our knowledge, Conjecture \ref{conj:full-measure-erdos} and Conjecture \ref{conj:original-erdos-similarity-conjecture} do not imply each other.

\item The validity of Conjecture \ref{conj:full-measure-iff-strongly-meager}  will imply that Conjecture \ref{conj:full-measure-erdos} is independent of \textsf{ZFC} by the fact that dual Borel conjecture is independent of \textsf{ZFC}, which, together with Theorem \ref{thm:R-top-univ}, confirms the principle of duality between measure and category.
\item As mentioned in Remark \ref{rmk:1}, strong measure zero sets form a $\sigma$-ideal. This motivated Prikry to conjecture that strongly meager sets also form a $\sigma$-ideal. However, it turns out that if one assumes \textsf{CH}, then strongly meager sets actually do not form a $\sigma$-ideal, as shown by Bartoszy\'nski and Shelah in \cite{bartoszynski-tomek-shelah}\footnote{This is an example of a subtlety in following the principle of measure-category duality. It often suggests dual results that turn out to be correct, but not always. As shown in this case, strongly meager sets sometimes behave worse than strong measure zero sets.}. We ask the following question: 

\smallskip

{\bf (Qu):} {\it Do full measure universal sets form  $\sigma$-ideals?} 

\smallskip

If the answer is yes, then assuming \textsf{CH}, the set of all strongly meager sets is a strict subset of the set of all strong measure zero sets. This produces a model of \textsf{ZFC} in which Conjecture \ref{conj:full-measure-iff-strongly-meager} is false, proving that the negation of the Conjecture \ref{conj:full-measure-iff-strongly-meager} is consistent in \textsf{ZFC}. We note that we do not yet know if Conjecture \ref{conj:full-measure-iff-strongly-meager} is consistent in \textsf{ZFC}, in which case, Conjecture \ref{conj:full-measure-iff-strongly-meager} would be independent of \textsf{ZFC}.


\item The measure-category duality (Theorem \ref{thm:erdos-sierpinski-mapping}) extends to arbitrary locally compact Polish groups. This allows one to define full measure universal sets and strongly meager sets in locally Polish group as the dual of strong measure zero sets via the Galvin-Mycielski-Solovay theorem (Theorem \ref{thm:gms-locally-compact-polish-group}): a set $X$ in a locally compact Polish group $\G$ is full measure universal if it is universal in full Haar measure sets in $\G$, and $X$ is strongly meager if $X+M\neq \G$ for all Haar measure zero subsets $M$ of $\G$.
\end{enumerate}
\end{remark}

\subsection{Perfect sets} Unfortunately, Conjecture \ref{conj:full-measure-iff-strongly-meager} is open even for perfect sets.   Erd\H{o}s, Kunen, and Mauldin \cite{erdos-kunen-mauldin} showed  that perfect sets are not strongly meager. 
\begin{theorem}[Erd\H{o}s-Kunen-Mauldin]\label{thm:erdos-kunen-mauldin}
    For every perfect set $P\subset\R$, there exists a set $M\subset\R$ of zero Lebesgue measure such that $P+M=\R$.
\end{theorem}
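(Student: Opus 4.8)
\emph{Plan.} The target $\R$ is unwieldy, so I would first reduce to covering a single bounded interval. It suffices to produce a Lebesgue-null set $M$ with $P+M\supseteq[0,1]$, for then $M+\Z$ is still null (a countable union of null sets) and $P+(M+\Z)=(P+M)+\Z\supseteq[0,1]+\Z=\R$. Next, since $Q\subseteq P$ implies $Q+M\subseteq P+M$, it is enough to prove the statement for some perfect subset of $P$. Every perfect set contains a nowhere dense perfect subset of measure zero (build a Cantor scheme inside $P$ with interval diameters shrinking fast enough that the resulting Cantor set is null), so I may assume from the outset that $P$ is a measure-zero Cantor set equipped with a Cantor scheme $\{I_s\}_{s\in 2^{<\omega}}$: nested closed intervals with $I_{s0},I_{s1}\subseteq I_s$ disjoint and $\operatorname{diam}(I_s)\to 0$, so that $P=\bigcap_n C_n$ with $C_n=\bigcup_{|s|=n}I_s$.

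I would then construct $M$ as the limit of a second, carefully matched Cantor scheme $\{J_t\}$, setting $D_n=\bigcup_{|t|=n}J_t$ and $M=\bigcap_n D_n$, while maintaining throughout the construction the covering invariant
\[
C_n+D_n\supseteq[0,1].
\]
The payoff is a clean compactness conclusion: for decreasing sequences of compact sets one has $\bigcap_n(C_n+D_n)=\big(\bigcap_n C_n\big)+\big(\bigcap_n D_n\big)=P+M$, where the nontrivial inclusion $\subseteq$ follows by extracting convergent subsequences $c_n\in C_n$, $d_n\in D_n$ from a decomposition $x=c_n+d_n$ and using the closedness of each $C_m,D_m$. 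Since every $C_n+D_n\supseteq[0,1]$, the intersection $P+M$ then contains $[0,1]$, as required. To make $M$ null it suffices to arrange $\sum_{|t|=n}|J_t|\to 0$, because $M\subseteq D_n$; I would simply choose the level-$n$ intervals $J_t$ short enough that $\sum_{|t|=n}|J_t|\le 2^{-n}$.

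The heart of the argument is the inductive step that preserves the invariant. Assuming $C_n+D_n\supseteq[0,1]$, each point of $[0,1]$ lies in some $I_s+J_t$; when $I_s$ splits into its children $I_{s0},I_{s1}$, the gap between them can make $(I_{s0}\cup I_{s1})+J_t$ fail to cover $I_s+J_t$, and this must be repaired by subdividing $J_t$ into finitely many children $J_{t,j}\subseteq J_t$, placed at shifted positions so that $\bigcup_j\big((I_{s0}\cup I_{s1})+J_{t,j}\big)$ recovers the portion of $[0,1]$ previously covered by $I_s+J_t$. The number of children needed is controlled by the branching geometry of $P$ (essentially the ratio of the length of $I_s$ to those of its children), while the \emph{lengths} of the $J_{t,j}$ remain free and can be pushed below any prescribed threshold; this is exactly what reconciles the covering requirement with the null-set bookkeeping above. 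I expect this geometric covering/tiling step to be the main obstacle, and I would address it by exploiting the freedom, granted by perfectness, to choose the Cantor scheme of $P$ so that at each node the two children and the gap between them have controlled relative size and position, ensuring that a sliding interval of length comparable to $|J_t|$ is always tiled by translates of the children. Assembling these steps yields a null set $M$ with $P+M=\R$.
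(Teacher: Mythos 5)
The paper does not actually prove this theorem---it quotes it from Erd\H{o}s--Kunen--Mauldin---so your proposal must be judged against the known argument rather than against anything in the text. Your reductions are fine: periodizing to reduce $\R$ to $[0,1]$, passing to a null Cantor subset of $P$, and the compactness identity $\bigcap_n (C_n+D_n)=\bigl(\bigcap_n C_n\bigr)+\bigl(\bigcap_n D_n\bigr)$ for decreasing compacts are all correct. The gap is in the inductive repair step, and it is not a technicality; it is where the entire difficulty of the theorem lives. Your repair is \emph{pairwise}: the portion covered by $I_s+J_t$ must be recovered by $\bigcup_j\bigl((I_{s0}\cup I_{s1})+J_{t,j}\bigr)$ with $J_{t,j}\subseteq J_t$. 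Comparing left and right endpoints of the two sides forces $I_{s0}$ to contain the left endpoint of $I_s$ and $I_{s1}$ the right endpoint; then covering the points $u+\min J_t$ for $u$ in the gap between the children forces that gap to be at most $|J_t|$. For a general perfect set you cannot make this gap small relative to $|I_s|$: if $P$ is the set of reals whose decimal digits are all $0$ or $9$, then every interval $I$ with endpoints in $P$ meeting $P$ in an uncountable set contains a gap of $P$ of length at least $0.8\,|I|$, so \emph{any} admissible pair of children leaves a gap $\geq 0.8\,|I_s|$, no matter how the scheme is chosen. Consequently the covering survives to the next stage only if every level-$(n+1)$ $J$-interval satisfies $h_{n+1}\gtrsim \ell_{n+1}$ (the largest level-$(n+1)$ $I$-interval length). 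This directly contradicts your key claim that ``the lengths of the $J_{t,j}$ remain free and can be pushed below any prescribed threshold'': they are free for one step, but the next repair bounds them from below.

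This lower bound then breaks the null-set bookkeeping. Since the same family of children of $J_t$ must serve \emph{every} $I_s$ (the covering must hold for every pair), consecutive new intervals inside $J_t$ can be at most $\alpha_{n+1}+h_{n+1}$ apart, where $\alpha_{n+1}$ is the \emph{smallest} child length over the whole level; hence
\[
|D_{n+1}|\;\geq\;\frac{h_{n+1}}{\alpha_{n+1}+h_{n+1}}\,|D_n|\;\geq\;\frac{\ell_{n+1}}{\alpha_{n+1}+\ell_{n+1}}\,|D_n|.
\]
This factor is always $\geq 1/2$, and it is close to $1$ unless $\alpha_{n+1}$ is comparable to $\ell_{n+1}$, i.e.\ unless \emph{all} $2^{n+1}$ children across the entire level have lengths within a bounded ratio of one another. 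So $|D_n|\to 0$ is possible in your framework only after proving a homogenization lemma: inside an arbitrary perfect set one can select a Cantor scheme whose children, at each level, have uniformly comparable lengths (with the comparability good enough that the product of the factors above diverges to $0$). The admissible child lengths at different points of $P$ are dictated by $P$ and can a priori live at wildly different scales, so this is a genuine structural lemma---it is exactly what your phrase ``exploiting the freedom granted by perfectness'' assumes without proof, and it is the real content of the theorem. The known proofs confront precisely this point, either by first extracting a metrically homogeneous subscheme or by abandoning pair-by-pair preservation and letting points of $[0,1]$ migrate between pairs as the levels progress; the model case $P=\{\text{decimal digits }0,9\}$, $M=\{\text{decimal digits }0,\dots,8\}$ (so $|D_n|=(9/10)^n\to 0$) shows how delicately the branching numbers and spacings must be matched. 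As written, your inductive step cannot produce a null $M$.
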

  Erd\H{o}s-Kunen-Mauldin theorem does not imply that perfect sets are not full measure universal because we do not know if the same $M$ works for all $\lambda\ne 1$ in Lemma \ref{lem:sumset-full-measure}. Hence, we pose the following generalization of the Erd\H{o}s-Kunen-Mauldin Theorem as a conjecture, which would imply that all perfect sets are not full measure universal using Proposition \ref{lem:sumset-full-measure}.
\begin{conjecture}\label{conj:full-meausre-perfect}
    Let $P\subset \R$ be a perfect set. Then, there exists a set $M\subset \R$ with Lebesgue measure zero such that $P+\lambda M=\R$ for all $\lambda\neq 0$. 
\end{conjecture}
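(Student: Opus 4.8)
The plan is to recast Conjecture \ref{conj:full-meausre-perfect} as a single covering problem and then isolate the scaling uniformity, which is where all the difficulty sits. First I would reformulate: for a set $M$ and $\lambda\neq 0$, the equality $P+\lambda M=\R$ says that for every $r\in\R$ there are $p\in P$ and $m\in M$ with $m=(r-p)/\lambda$, i.e. $M\cap\bigl(\tfrac r\lambda-\tfrac1\lambda P\bigr)\neq\varnothing$. As $\lambda$ ranges over $\R\setminus\{0\}$ and $r$ over $\R$, the sets $\tfrac r\lambda-\tfrac1\lambda P$ run through exactly all affine copies $a+bP$ with $b\neq 0$. Thus the conjecture is equivalent to the statement that there is a Lebesgue-null set $M$ meeting every affine copy of $P$; writing $s=1/\lambda$, this asks for one null $M$ with $M\cap(t+sP)\neq\varnothing$ for all $t\in\R$ and all $s\neq 0$.

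Next I would reduce the scale parameter to a single compact band. Since $P$ is perfect it contains a Cantor set $C$, and because $t+sC\subseteq t+sP$ it suffices to meet every affine copy of $C$ (a point of $M$ inside $t+sC$ lies in $t+sP$). Negative scales reduce to positive ones after replacing $C$ by $-C\subseteq -P$, so taking $M=M_+\cup M_-$, a union of two null sets, I may assume $s>0$. The decisive observation is that a single band suffices: suppose $N$ is null and meets $t+sC$ for every $t\in\R$ and every $s\in[1,2]$. Given any $s>0$, write $s=2^k s'$ with $k\in\Z$ and $s'\in[1,2)$; scaling the property of $N$ by $2^k$ shows that $2^kN$ meets $t+sC$ for all $t$. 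Hence $M_+=\bigcup_{k\in\Z}2^kN$, a countable union of null sets and therefore null, meets $t+sC$ for every $s>0$. The whole conjecture thus collapses to producing one null set $N$ that meets every translate $t+sC$ with $s$ ranging over the compact interval $[1,2]$.

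The heart of the argument is then a band-uniform strengthening of the Erd\H{o}s--Kunen--Mauldin theorem (Theorem \ref{thm:erdos-kunen-mauldin}), which already supplies, for the single scale $s=1$, a null set meeting every translate of $C$. To upgrade this to all $s\in[1,2]$ I would build $N=\bigcup_j F_j$ from finite multi-scale gadgets $F_j$ at geometrically shrinking resolutions $\delta_j$, arranging that for every $t$ and every $s\in[1,2]$ some $F_j$ deposits a point inside $t+sC$, while keeping $\sum_j|F_j|\,\delta_j$ finite so that $N$ is null. Here I would exploit that, as $s$ sweeps $[1,2]$, the family $\{\,sC\,\}$ fattens into a set of positive measure, so that nearby scales share piercing points and a single net can handle a whole subinterval of scales at once; a careful matching of the net spacing to the gap structure of $C$ across the band is the technical core. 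In parallel I would try a parametrized probabilistic construction, placing $N$ at random with a self-similar law and arguing, by compactness in $(t,s)\in\R\times[1,2]$, that the closed bad set $\{(t,s):(t+sC)\cap N=\varnothing\}$ is almost surely empty.

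The main obstacle is exactly this uniformity over the continuum of scales inside the band. The Erd\H{o}s--Kunen--Mauldin construction is tuned to one ratio, and scaling its output $M_0$ produces $sM_0$, which genuinely depends on $s$; since $[1,2]$ is uncountable, no countable union of scaled copies can cover it, so a single null set meeting all of these translates demands a new, scale-coherent construction rather than a gluing of known ones. A further subtlety is that a perfect set need not contain an exactly self-similar Cantor subset, so the band construction must be carried out for a Cantor set whose gap ratios are only controlled, not constant. I expect that settling the band-uniform version of Erd\H{o}s--Kunen--Mauldin---equivalently, deciding whether a single null set can pierce every $t+sC$ for $s\in[1,2]$---is the crux on which the entire conjecture turns, matching the gap noted after Theorem \ref{thm:erdos-kunen-mauldin} that we do not know the same $M$ works for all $\lambda$.
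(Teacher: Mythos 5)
The statement you set out to prove is posed in the paper as a conjecture: the paper does not prove it, and neither do you. Your proposal consists of a chain of correct reductions followed by an acknowledged open problem. The reductions are fine: $P+\lambda M=\R$ for all $\lambda\neq 0$ is indeed equivalent to asking that $M$ meet every affine copy $a+bP$ with $b\neq 0$; passing to a Cantor subset $C\subset P$ and to positive scales costs nothing; and the dyadic trick (if a null set $N$ meets every $t+sC$ with $s\in[1,2]$, then $\bigcup_{k\in\Z}2^kN$ is null and handles all $s>0$) is sound, since a countable union of null sets is null. But the heart of the matter --- producing a single null set $N$ that meets $t+sC$ for every $t\in\R$ and every $s$ in the band $[1,2]$, for an arbitrary Cantor set $C$ --- is left as a plan (``I would build\dots'', ``I would try\dots'') with no construction and no estimates. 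This band-uniform strengthening of Theorem \ref{thm:erdos-kunen-mauldin} is precisely the open crux; you identify it honestly, but identifying the crux is not resolving it, so the proposal has a genuine and essential gap.

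For comparison: the paper treats Conjecture \ref{conj:full-meausre-perfect} only conditionally, in the Proposition that closes Section \ref{section:4}. That Proposition assumes a band-uniform hypothesis --- a compact null set $K$ and constants $\epsilon_0,\delta>0$ such that $P+tK$ contains an interval of length $\epsilon_0$ for all $t\in(1-\delta,1]$ --- and from it builds the null $F_\sigma$ set $M=\bigcup_{j\in\Z}\bigcup_{k\in\Z}a^j\left(K+\epsilon_0 k\right)$ with $1<a<\frac{1}{1-\delta}$, using the multiplicative net $a^j$ exactly as you use powers of $2$. Your condition ``a null $N$ with $N\cap(t+sC)\neq\varnothing$ for all $t\in\R$ and all $s\in[1,2]$'' is of the same nature (it says $N-sC=\R$ for every $s$ in the band), so your reduction essentially rediscovers the paper's conditional framework rather than going beyond it. The paper can verify its hypothesis only for perfect sets of positive Newhouse thickness (via the Newhouse gap lemma) or positive Hausdorff dimension (via a probabilistic argument attributed to Shmerkin), and states explicitly that the remaining case is open. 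If you want to make genuine progress, the target is the band-uniform covering statement itself; everything else in your write-up is already known and, in substance, already in the paper.
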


Conjecture \ref{conj:full-measure-iff-strongly-meager} and Conjecture \ref{conj:full-meausre-perfect} will resolve a stronger version of the Erd\H{o}s similarity conjecture for perfect sets. Finally, we conclude this section with a sufficient condition for Conjecture \ref{conj:full-meausre-perfect}, which only require robustness under arbitrarily small amount of scaling.

\begin{proposition}
    Suppose that for all compact perfect set $P$, there exist $\epsilon_0>0$, $\delta>0$ and a compact set $K$ with zero Lebesgue measure such that $P+ t K$ contains an open interval of length $\epsilon_0$ for all $t\in (1-\delta,1]$. Then, there exists an $F_{\sigma}$ set $M$ satisfying Conjecture \ref{conj:full-meausre-perfect}.  
\end{proposition}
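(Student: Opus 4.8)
The plan is to reduce the statement, for a fixed perfect set $P$, to producing a single null $F_\sigma$ set that defeats \emph{every} dilate $sP$ at once, and then to assemble such a set from one compact null witness $K$ by dilating it along a geometric sequence of scales. First I would record the elementary equivalence: $P+\lambda M=\R$ for all $\lambda\neq 0$ if and only if $sP+M=\R$ for all $s\neq 0$ (divide the identity $sP+M=\R$ by $s$ and set $\lambda=1/s$). So it suffices to find a null $F_\sigma$ set $M$ with $sP+M=\R$ for every $s\neq 0$. Next I would use a density trick to pass from ``contains an interval'' to ``equals $\R$'': if $M_0$ is a set for which $sP+M_0$ contains a nonempty open interval $I_s$ for every $s\neq 0$, then $M:=\Q+M_0$ satisfies $sP+M=\Q+(sP+M_0)\supseteq \Q+I_s=\R$, since a dense set plus a nondegenerate interval is all of $\R$. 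Moreover $M$ stays null (a countable union of null sets) and $F_\sigma$ (a countable union of translates of an $F_\sigma$ set) whenever $M_0$ is. Thus the whole problem reduces to building a null $F_\sigma$ set $M_0$ with $sP+M_0$ containing an open interval for every $s\neq 0$.

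Now I would invoke the hypothesis. Since every perfect set contains a compact perfect subset $P_0\subseteq P$ and $sP_0\subseteq sP$ forces $sP_0+M\subseteq sP+M$, I may assume $P$ is compact perfect. The hypothesis supplies $\epsilon_0,\delta>0$ and a compact null set $K$ with $P+tK$ containing an interval of length $\epsilon_0$ for all $t\in(1-\delta,1]$. The key computation is the rescaling identity $P+tK=t\bigl(t^{-1}P+K\bigr)$: dividing through by $t$ shows that $sP+K$ contains an interval (of length at least $\epsilon_0$) for every $s$ in the multiplicative interval $[1,r)$, where $r:=(1-\delta)^{-1}>1$. This is precisely a nondegenerate block of scales.

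Then I would cover all positive scales by dilation. Since $\{r^k:k\in\Z\}$ carries $[1,r)$ onto a partition $\bigcup_{k\in\Z}[r^k,r^{k+1})=(0,\infty)$, I set $M_0^{+}:=\bigcup_{k\in\Z} r^k K$. For each $s>0$, write $s=r^k s'$ with $s'\in[1,r)$; then $sP+M_0^{+}\supseteq sP+r^k K=r^k\bigl(s'P+K\bigr)$, which contains an interval. A reflection $M_0:=M_0^{+}\cup(-M_0^{+})$ handles negative scales: for $s<0$ the set $sP+M_0$ contains $-\bigl(|s|P+M_0^{+}\bigr)$, which contains an interval because $|s|>0$. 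Each $r^k K$ is compact and null, so $M_0$, and hence $M=\Q+M_0$, is a null $F_\sigma$ set, completing the construction and, via the opening reduction, proving the proposition.

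The main obstacle, and the reason the hypothesis is stated with robustness over the entire interval $t\in(1-\delta,1]$ rather than at the single scale $t=1$, is exactly the step of covering all scales $s\neq 0$ from data localized near scale $1$. A single value of $t$ would only yield coverage of the discrete set $\{r^k\}$ of scales; it is the nonempty multiplicative interval $[1,r)$ produced by the robustness that, under dilation by powers of $r$, sweeps out all of $(0,\infty)$. Once this covering idea is in place, verifying that $M$ is genuinely null and $F_\sigma$ and that the dilation/translation bookkeeping is consistent is routine.
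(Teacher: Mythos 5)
Your proof is correct, and its engine is the same as the paper's: dilate the single witness $K$ along a geometric sequence of scales (your $r^k$ with $r=(1-\delta)^{-1}$, the paper's $a^j$ with $1<a<(1-\delta)^{-1}$) so that every scale $\lambda$ lands, after factoring out a power, in the multiplicative window $(1-\delta,1]$ where the hypothesis applies, and then use countably many translates to upgrade ``contains an interval'' to ``equals $\R$.'' The executions differ in the upgrading step and in thoroughness. The paper takes $M=\bigcup_{j\in\Z}\bigcup_{k\in\Z}a^j\left(K+\epsilon_0 k\right)$ and argues that the translates $I+\epsilon_0 t k$ of the length-$\epsilon_0$ interval $I\subset P+tK$ have spacing $t\epsilon_0\le \epsilon_0$ and hence cover $\R$; your substitute, adding $\Q$ to a bare union of dilates of $K$, does the same job with no spacing bookkeeping (and quietly avoids an endpoint quibble in the paper's argument: when $t=1$ the open translates $I+\epsilon_0 k$ miss a lattice of endpoints, which one fixes by noting $P+tK$ is compact and so contains $\overline{I}$). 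More substantively, your write-up closes two small gaps that the paper's proof leaves open. First, the paper verifies $P+\lambda M=\R$ only for $\lambda>0$, while the conjecture demands all $\lambda\neq 0$; your reflection $M_0\cup(-M_0)$ handles negative scales explicitly. Second, the hypothesis concerns compact perfect sets while the conjecture concerns arbitrary perfect sets; you bridge this by passing to a compact perfect subset (which every nonempty perfect set contains) and using monotonicity of sumsets, a step the paper omits. Both patches are easy, but your version supplies them.
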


\begin{proof}
    Let $1<a<\frac{1}{1-\delta}$. Then, all consecutive intervals $(a^j(1-\delta), a^j]$ and $(a^{j+1}(1-\delta), a^{j+1}]$ overlap. Moreover, as $a>1$, 
    \begin{equation}\label{dilation}
         \bigcup_{j\in \Z} (a^j(1-\delta), a^j] = (0,+\infty).
    \end{equation}
    Define 
    $$
    M = \bigcup_{j\in\Z} \bigcup_{k\in\Z}a^j \left(K+ {\epsilon_0}k\right).
    $$
    Then $M$ clearly has zero Lebesgue measure. Note that by (1), for each $\lambda>0$, we can find $j\in \Z$ so that $t: = a^{-j}\lambda\in (1-\delta, 1]$. Let $I$ be the open interval inside $P+t K$ with length $\epsilon_0$. Then, since $\lambda a^{-j}\le 1$, we have
    $$
    P+\lambda M\supset P+\lambda\bigcup_{k\in\Z}a^{-j} \left(K+ {\epsilon_0}k\right)  \supset \bigcup_{k\in\Z}\left(I+ \epsilon_0 t k\right) = \R.
    $$
    \end{proof}
Such compact set $K$ can easily be constructed if $P$ has positive Newhouse thickness, in which case, one can use the Newhouse gap lemma. This recovers the result that Cantor sets with positive Newhouse thickness are not full-measure universal in \cite[Theorem 1.5]{gallagpher-lai-weber}. We were also informed by P. Shmerkin that the same is also true if $P$ has positive Hausdorff dimension using a probabilistic argument in \cite[Theorem 13.1]{SV2015} (an argument is outlined in \cite{Jung-Lai-Mooroogen}). We do not have a definitive answer to Conjecture \ref{conj:full-meausre-perfect} if Cantor set has both Hausdorff dimension and Newhouse thickness zero. 

\noindent{\bf Acknowledgments} The authors thank Daeseok Lee, Krystal Taylor, Alex McDonald, and Matthew Foreman for helpful discussions.


\bibliographystyle{amsplain}
\bibliography{refs}

\end{document}